\definecolor{darkblue}{rgb}{0, 0, .4}
\definecolor{grey}{rgb}{.7, .7, .7}
  \newcommand{\href}[2]{#2}
  \newcommand{\url}[2]{#2}
\newtheorem{theorem}{Theorem}[section]
\newtheorem{lemma}[theorem]{Lemma}
\theoremstyle{definition}
\newtheorem{definition}[theorem]{Definition}
\newtheorem{example}[theorem]{Example}
\theoremstyle{remark}
\newtheorem{remark}[theorem]{Remark}
\numberwithin{equation}{section}
\theoremstyle{theorem}
\newtheorem{corollary}[theorem]{Corollary}
\newcommand{\N}[0]{\mathbb{N}}
\newcommand{\ra}{\rightarrow}
\newcommand{\T}{\mathcal{T}}
\renewcommand{\S}{\mathcal{S}}
\newcommand{\SN}{\mathfrak{S}}
\renewcommand{\c}{\circ}
\newcommand{\pf}[1]{\pi|_{[#1]}}
\newcommand{\G}[0]{\Gamma}
\newcommand{\stir}[1]{\{#1\}!}
\newcommand{\ta}[1]{[#1]}
\newcommand{\fa}[1]{[#1]!}
\renewcommand{\t}[0]{\theta}
\renewcommand{\k}[0]{\kappa}
\newcommand{\DW}[0]{\Delta W}
\begin{document}

\title{Weighted games of best choice}

\begin{abstract}
The game of best choice (also known as the secretary problem) is a model for sequential decision making with a long history and many variations.  The classical setup assumes that the sequence of candidate rankings are uniformly distributed.  Given a statistic on the symmetric group, one can instead weight each permutation according to an exponential function in the statistic.  We play the game of best choice on the Ewens and Mallows distributions that are obtained in this way from the number of left-to-right maxima and number of inversions in the permutation, respectively.  For each of these, we give the optimal strategy and probability of winning.  Moreover, we introduce a general class of permutation statistics that always produces games of best choice whose optimal strategies are positional, which simplifies their analysis considerably.
\end{abstract}

\author{Brant Jones}
\address{Department of Mathematics and Statistics, MSC 1911, James Madison University, Harrisonburg, VA 22807}
\email{\href{mailto:jones3bc@jmu.edu}{\texttt{jones3bc@jmu.edu}}}
\urladdr{\url{http://educ.jmu.edu/\~jones3bc/}}


\date{\today}

\maketitle

\section{Introduction}\label{s:intro}

The game of best choice (or secretary problem) is a model for sequential
decision making, popularized in a Scientific American column of Martin Gardner
reprinted in \cite{gardner}.  In the simplest variant, an {\bf interviewer} evaluates $N$
{\bf candidates} one by one.  After each interview, the interviewer ranks the
current candidate against all of the candidates interviewed so far, and decides
whether to {\bf accept} the current candidate (ending the game) or to
{\bf reject} the current candidate (in which case, they cannot be recalled
later).  The goal of the game is to hire the best candidate out of $N$.  It
turns out that the optimal strategy is to reject an initial set of candidates,
of size $N/e$ when $N$ is large, and use them as a training set by hiring the
next candidate who is better than all of them (or the last candidate if no
subsequent candidate is better).  The probability of hiring the best candidate
out of $N$ with this strategy approaches $1/e$.  This result has been
generalized in several directions by \cite{gilbert--mosteller}; see also
\cite{ferguson} and \cite{freeman} for historical surveys.  Recently,
researchers (e.g. \cite{kleinberg08}) have begun applying the best-choice
framework to online auctions where the ``candidate rankings'' are bids (that may
arrive and expire at different times) and the player must choose which bid to
accept, ending the auction.  

Most models assume that all $N!$ interview rank orders are equally
likely, which we believe is mathematically expedient but unrealistic.  There
may exist extrinsic trends in the candidate pool due to changes in general
economic conditions over the period that the player is conducting the $N$
interviews.  Also, as the interviewer compares and ranks the candidates at each
step, they are forced to find distinctions that may simultaneously be used to
hone the pool by filtering out irrelevant candidates.  Overall, this intrinsic
learning about the candidate pool would tend to result in candidate ranks that are
improving over time rather than uniform.

Towards understanding such mechanisms, we are interested in how assumptions
about the distribution of interview rank orders change the optimal strategy and
probability of success in the game of best choice.  While many authors have
investigated a ``full-information'' version of the game in which the interviewer
observes values from a known distribution, only a few papers from the literature
(e.g.  \cite{pfeifer,reeves--flack}) have previously considered such nonuniform
rank distributions for the secretary problem.  Continuing work from
\cite{fowlkes} and \cite{jones18}, we establish in this paper a class of
weighted models that generalize the classical game in a natural way.

We model interview orderings as {\bf permutations}.  The permutation $\pi$ of
$N$ is expressed in one-line notation as $[\pi_1 \pi_2 \cdots \pi_N]$
where the $\pi_i$ consist of the elements $1, 2, \ldots, N$ (so each element
appears exactly once).  In the best choice game, $\pi_i$ is the rank of the
$i$th candidate interviewed {\em in reality}, where rank $N$ is best and $1$ is
worst.  What the interviewer sees at each step, however, are {\em relative} rankings.
For example, corresponding to the interview order $\pi = [2516374]$, the interviewer
sees the sequence of permutations $1, 12, 231, 2314, 24153, 241536, 2516374$
and must use only this information to determine when to accept a candidate,
ending the game.  The {\bf left-to-right maxima} of $\pi$ consist of the
elements $\pi_j$ that are larger in value than all elements $\pi_i$ lying to the
left.  It is never optimal to accept a candidate that is not a left-to-right
maximum because $N$ is always a left-to-right maximum in any
permutation.  The {\bf inversions} of $\pi$ consist of pairs $\pi_i > \pi_j$ where $i <
j$.

Now, let $c: \SN_N \rightarrow \N$ be some statistic on the symmetric group of
permutations of size $N$.  Then we can weight the permutation $\pi \in \SN_N$ by
$\t^{c(\pi)}$, where $\t$ is a positive real number, to obtain a discrete
probability distribution on $\SN_N$.  Distributions of this form were introduced
by Mallows where $c(\pi)$ represents some measure of distance from a fixed
permutation, typically the identity.  More recently, these distributions have
been used by researchers in combinatorics; see \cite{elizalde,bouvel,log-book}
for example.  

The {\bf weighted game of best choice} selects a permutation $\pi \in \SN_N$
with probability proportional to $\t^{c(\pi)}$ and then proceeds just as in the
classical model:  relative rankings are presented to the interviewer
sequentially and the game is won if the best candidate is accepted.  When $\t =
1$, we recover the uniform distribution so our model includes the classical game
as a specialization.  Also, we remark that for a given statistic, $c(\pi)$, the
probability of winning the weighted game of best choice under the non-uniform
distribution we defined is the same (up to rescaling by a constant involving
$\t$ and $N$) as the expected value of a random variable representing the
non-uniform payoff $\t^{c(\pi)}$ for the classical game played on a uniform
distribution.  So the weight can be interpreted as determining the underlying
probability distribution (with uniform payoff) or as determining a payoff (with
uniform probability).

In our first result, Theorem~\ref{t:main}, we show that for a large class of
statistics the optimal strategy in the weighted game of best choice has the same
form as that for the classical game:  to reject an initial set of candidates and
accept the next left-to-right maximum thereafter.  We refer to this as a {\bf
positional strategy} because the strategy accepts or rejects a candidate (that
is a left-to-right maximum) based solely on its position, even though the full
history of relative rankings containing more refined information is available.  

Next, we present an analysis of the weighted game of best choice for two
specific statistics.  The first model uses the Ewens distribution where $c(\pi)$
is the number of left-to-right maxima in $\pi$.  Setting $\t > 1$ is a concise
way of obtaining candidate ranks that tend to be improving over time, consistent
with intrinsic learning.  The second model is based on what has become known as
the Mallows distribution where $c(\pi)$ is the number of inversions in $\pi$.
Setting $\t < 1$ dampens the probability of (or imposes a cost for) experiencing
``disappointing pairs,'' where an earlier candidate ranks higher than a later
candidate.  In terms of permutation patterns, the Mallows model weights each
permutation $\pi$ by the number of $21$-instances in $\pi$, which facilitates
comparison with results in \cite{fowlkes} and \cite{jones18} for the
$321$-avoiding model.  

Once we know by Theorem~\ref{t:main} that some positional strategy is optimal
for each value of $\theta$, we can define the {\bf strategy function}
$\k_N(\theta): \mathbb{R}_{>0} \rightarrow \{0, 1, \cdots, N-1\}$ for a weighted
game of best choice to be the number of candidates that we initially reject in
the optimal strategy for value $\theta$.  In Corollary~\ref{c:ewensmain}, we
describe this function precisely for the Ewens model.  As $N$ becomes large, we
find that the optimal strategy depends on $\t$ with the optimal number of
initial rejections being $k = N/e^{1/\t}$.  Remarkably though, the probability
of success is always $1/e$, independent of $\t$, neither better nor worse than
the classical case.

The Mallows model is more subtle.  When $\t < 1$ and $N \rightarrow \infty$, the
optimal strategy is to reject all but the last $j = \max(-1/(\ln \t), 1)$
candidates and select the next left-to-right maximum thereafter.  This
``right-justified'' strategy succeeds with probability $j \t^{j-1} (1-\t)$.
When $\t > 1$, the optimal strategy is ``left-justified,'' rejecting the first
$k$ candidates for some $k$ depending on $\t$ but independent of $N$.
Consequently, this shows that the classical model (as embedded in the Mallows
model) is highly unstable!  Even an infinitesimal change away from $\t = 1$,
where the asymptotic optimal strategy rejects about $37\%$ of the candidates,
results in an optimal strategy that asymptotically rejects either $0\%$ or
$100\%$ of the candidates.  This shows that ``policy advice'' derived from the
classical model (e.g. \cite{golden}) has limited durability, a point which does
not seem to have been appreciated in popular accounts of the secretary problem.
In the future, it would be interesting to find (or better understand
obstructions to) best choice models where the asymptotic optimal strategy varies
continuously with parameterizations of the underlying distribution.

We now outline the rest of the paper.  In Section~\ref{s:setup} we review the
form of the optimal strategy for games of best choice, and show in
Section~\ref{s:tilt} that any sufficiently local statistic will generate a game
with an optimal strategy having the same form as the classical model.  In
Sections~\ref{s:ew} and \ref{s:aew}, we obtain precise and asymptotic results
for the Ewens model.  The Mallows model is treated in Section~\ref{s:m}.

\section{Strategies for the weighted game of best choice}\label{s:setup}

In this section, we give precise notation for the various strategies that can be
employed in the weighted game of best choice, and compute their probabilities of
winning.

Fix a discrete probability distribution on the symmetric group $\SN_N$ where
$f(\pi)$ is the probability of the permutation $\pi~\in~\SN_N$.  
In this work, we are primarily interested in probability distributions obtained
from weighting by some statistic $c : \SN_N \rightarrow \N$ and a
positive real number $\theta$ via
\[ f(\pi) = \frac{ \theta^{c(\pi)} }{ \sum_{\pi \in \SN_N} \theta^{c(\pi)} }. \]
For example, we may take $c(\pi)$ to be the number of left-to-right maxima in
$\pi$, obtaining the {\bf Ewens distribution}; if $c(\pi)$ is the number of
inversions then we obtain the {\bf Mallows distribution}.
When $\theta = 1$, we recover the complete uniform distribution.

\begin{definition}\label{d:pflat}
Given a permutation $\pi$, define the {\bf $i$th prefix flattening}, denoted
$\pf{i}$, to be the unique permutation in $\SN_i$ having the same relative order
as the sequence of entries $\pi_1, \pi_2, \ldots, \pi_i$.  For brevity, we also
refer to these permutations as {\bf prefixes} of $\pi$.
\end{definition}

The set of all possible prefixes from $\bigcup_{i=1}^N \SN_i$ are partially
ordered by containment, where a permutation of smaller size occurs as the
prefix flattening of a permutation of larger size.  At the start of the game,
some $\pi$ is chosen randomly with probability $f(\pi)$.  During each move, the
$i$th prefix flattening of $\pi$ is presented to the interviewer, and they must
use only this information to determine whether to accept or reject candidate
$i$.  Therefore, we can represent any strategy (including the optimal strategy)
as a complete list of the prefixes that will immediately trigger an acceptance
by the interviewer.  We refer to such a list as a {\bf strike set}.  If, during
the game, a prefix flattening from $\pi$ does not appear in the strike set then
the current candidate is rejected and the game continues.

\begin{figure}[t]
\[ \scalebox{0.67}{ \begin{tikzpicture}[grow=down]
     \tikzstyle{level 1}=[sibling distance=120mm]
     \tikzstyle{level 2}=[sibling distance=40mm]
     \tikzstyle{level 3}=[sibling distance=10mm]
     \node {$\substack{1 \\ \ \\ \frac{6\t}{\t^4 + 6\t^3 + 11\t^2 + 6\t}}$ }
     child{ node {$\substack{12 \\ \ \\ \frac{6\t^2}{\t^4 + 5\t^3 + 6\t^2}}$ } 
     child{ node {$\substack{123 \\ \ \\ \frac{3\t^3}{\t^4 + 3\t^3}}$ }
     child{ node{$\substack{1234 \\ \ \\ \frac{\t^4}{\t^4}}$}}
       child{ node{1243}}
       child{ node{1342}}
       child{ node{2341}}
   }
   child{ node[gray]  {132 }
   child{ node{$\substack{1324 \\ \ \\ \frac{\t^3}{\t^3}}$} }
       child{ node{1423}}
       child{ node{1432}}
       child{ node{2431}}
   }
   child{ node[gray]  {231 }
   child{ node{$\substack{2314 \\ \ \\ \frac{\t^3}{\t^3}}$} }
       child{ node{2413}}
       child{ node{3412}}
       child{ node{3421}}
   }
 }
 child{ node[gray]  {21 } 
 child{ node {$\substack{213 \\ \ \\ \frac{3\t^2}{\t^3 + 3\t^2}}$ }
 child{ node{$\substack{2134 \\ \ \\ \frac{\t^3}{\t^3}}$}}
       child{ node{2143}}
       child{ node{3142}}
       child{ node{3241}}
   }
   child{ node[gray]  {312 }
   child{ node{$\substack{3124 \\ \ \\ \frac{\t^2}{\t^2}}$} }
       child{ node{4123}}
       child{ node{4132}}
       child{ node{4231}}
   }
   child{ node[gray]  {321 }
   child{ node{$\substack{3214 \\ \ \\ \frac{\t^2}{\t^2}}$} }
       child{ node{4213}}
       child{ node{4312}}
       child{ node{4321}}
   }
 };
\end{tikzpicture}
}
\]
\caption{Prefix tree for $N=4$ with strike probabilities for the Ewens
distribution}\label{f:treeex}
\end{figure}

Since the set of prefixes represent all possible positions in the
game, and cover relations in the containment partial order represent ``reject''
moves by the interviewer, we can view this structure as a combinatorial game
tree that we refer to as {\bf prefix tree}.  See Figure~\ref{f:treeex} for a
small example.  The prefix tree can be solved to obtain the optimal strategy
via backwards induction as we will see below.

\begin{definition}
Given $p \in \bigcup_{i=1}^N \SN_i$, we say that $\pi \in \SN_N$ is {\bf
$p$-prefixed} if one of its prefix flattenings is $p$.  We say $\pi \in
\SN_N$ is {\bf $p$-winnable} if accepting the prefix flattening $p$ would win
the game with interview order $\pi$.  Explicitly for $p = p_1 p_2 \cdots p_k$, we have
that $\pi$ is $p$-winnable if $\pi$ is $p$-prefixed and $\pi_k = N$.  
For each prefix $p$, we define the {\bf strike probability} $\S(p)$ to be 
\[ \mathsf{P}[ \text{win the game $\pi$ under the strategy ``accept prefix $p$''} \ |\  \text{$\pi$ is $p$-prefixed} ], \]
i.e. the probability of winning the game if the strike set contained $p$ (and no other
prefixes of $p$) and we restricted the game to those interview rank orders $\pi$
having $p$ as a prefix.  
\end{definition}

Since each denominator of $f(\pi)$ is just a normalizing constant, we may cancel
it obtaining the formula
\[ \S(p) = \left( \sum\limits_{p\text{-winnable } \pi \in \SN_N} \theta^{c(\pi)}
\right)\Big/\left( \sum\limits_{p\text{-prefixed } \pi \in \SN_N}
\theta^{c(\pi)} \right). \]

\begin{definition}
We say that a prefix $p$ is {\bf eligible} if it ends in a left-to-right maximum or has size $N$.
A strike set is {\bf valid} if it 
\begin{enumerate}
\item consists of prefixes that are eligible, and 
\item has no pair of elements such that one contains the other as a prefix, and 
\item every permutation in $\SN_N$ contains some element of the strike set as a prefix.
\end{enumerate}
\end{definition}
In Figure~\ref{f:treeex} we have illustrated the prefix tree with ineligible
prefixes shown in gray and strike probabilities given below each eligible
prefix.  It follows immediately from the definitions that any strategy for the
weighted game of best choice can be represented by a valid strike set.  

Finally, we determine the optimal strategy in the form of a strike set.

\begin{definition}
Let $\S^\c(p)$ be the {\bf open} probability of a win if we play optimally,
using any strike set consisting of prefixes that contain (but are not equal to)
$p$.  Similarly, let $\bar\S({p})$ be the {\bf closed} probability of a win if
we play optimally, using any strike set consisting of prefixes that contain (and
may include) $p$.  That is,
\[ \S^\c(p) = \mathsf{P}[ \text{win the game $\pi$ under the best strategy available after rejecting $p$ } \ |\  \text{$\pi$ is $p$-prefixed} ], \]
\[ \bar\S(p) = \mathsf{P}\left[ \parbox{4in}{win the game $\pi$ under the best strategy available after rejecting the largest proper prefix of $p$ } \ \Big|\  \text{$\pi$ is $p$-prefixed} \right], \]
We define each of these to be conditional probabilities,
restricting to those interview orders $\pi$ having $p$ as a prefix, with {\bf
standard denominator} $\sum_{p\text{-prefixed } \pi \in \SN_N} \theta^{c(\pi)}$.  
\end{definition}

Then, it follows directly from the definitions that
\[ \bar\S({p}) = \max(\S(p), \S^\c(p)). \]
This formula can be used to recursively determine $\bar\S({1})$, the globally
optimal probability of a win.  To also keep track of the optimal strategy, let
us say that a prefix $p$ is {\bf positive} if $\S(p) \geq \S^\c(p)$ and {\bf
negative} otherwise.  

\begin{example}
Suppose $N = 4$ as illustrated in Figure~\ref{f:treeex} and $\theta = 1$.  Consider the prefix $p = 123$, so the first three candidates have increasing ranks.  The best we can do after rejecting them is to accept the last candidate.  This succeeds in winning the game with probability $\S^\c(123) = 1/4$.  If, by contrast, we were also allowed to accept the third candidate, then we should do so as our probability of winning becomes $\bar\S(123) = 3/4$.
\end{example}

\begin{theorem}
With the setup given above, a globally optimal strike set for a weighted game of
best choice consists of the subset $A$ of positive prefixes that are minimal when
partially ordered by prefix-containment.  The probability of winning is 
$\bigoplus_{p \in A} \S(p)$ where we define $\frac{a}{b} \oplus \frac{c}{d} =
\frac{a+c}{b+d}$ and use the standard denominator for all strike probabilities.
\end{theorem}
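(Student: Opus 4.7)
The plan is to proceed by backwards induction on the size of a prefix $p$, establishing the conditional optimal probability $\bar\S(p)$ for each prefix simultaneously with the optimal strategy on the subtree rooted at $p$, and then reading off the global optimum at the root. The base case consists of size-$N$ prefixes, at which $\S^\c(p)=0$ and so $\bar\S(p)=\S(p)\in\{0,1\}$ depending on whether $\pi_N=N$. The globally optimal probability is then $\bar\S(1)$ at the root of the prefix tree.

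The key identity that drives the induction is, for any prefix $p$ of size $k < N$ with children $q_1,\ldots,q_{k+1}$ in the prefix tree (all one-step extensions, eligible or not),
\[
\S^\c(p) = \bigoplus_{j=1}^{k+1} \bar\S(q_j),
\]
where each $\bar\S(q_j)$ is written over its standard denominator $\sum_{q_j\text{-prefixed }\pi}\theta^{c(\pi)}$. This is essentially bookkeeping: the $p$-prefixed permutations partition as the disjoint union of the $q_j$-prefixed sets, so the standard denominators sum to the standard denominator of $p$, and the total $\theta^{c(\pi)}$-weight of winning rank orders under optimal play below $p$ decomposes accordingly across the children. For an ineligible $q_j$ this is consistent with the uniform identity $\bar\S(q)=\max(\S(q),\S^\c(q))$ because $\S(q)=0$ whenever $q$ is ineligible (striking a non-left-to-right maximum can never produce $N$), forcing $\bar\S(q)=\S^\c(q)$ automatically.

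Combined with the definition $\bar\S(p)=\max(\S(p),\S^\c(p))$, this shows that the locally optimal decision at an eligible $p$ is to strike precisely when $p$ is positive, and otherwise to defer to the optimal strategy on the descendants. Descending from the root, this characterizes the globally optimal strike set as the set of minimal positive prefixes along each path from the root, which is exactly the set $A$ in the statement. Substituting $\bar\S(p)=\S(p)$ at each $p\in A$ and telescoping the recursion $\bar\S(p)=\bigoplus_j\bar\S(q_j)$ upward through the intermediate negative nodes yields $\bar\S(1)=\bigoplus_{p\in A}\S(p)$, using associativity of $\oplus$ and the fact that at every level the denominators sum consistently under the standard-denominator convention.

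I expect the main subtlety to be the $\oplus$ bookkeeping: because $\oplus$ is not ordinary fraction addition, the argument must maintain the standard-denominator convention uniformly and verify that the partition of $p$-prefixed permutations into $q_j$-prefixed subfamilies transfers faithfully to both numerators and denominators at each telescoping step. A secondary point to address is that ineligible prefixes do not accidentally land in $A$; this follows because $\S(q)=0$ on ineligible $q$, so such $q$ can only be positive when $\S^\c(q)=0$, in which case including or excluding $q$ changes neither the probability nor the value of $\bigoplus_{p\in A}\S(p)$, and one may safely take $A$ to consist of eligible prefixes.
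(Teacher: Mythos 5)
Your proposal is correct and follows essentially the same route as the paper: backwards induction on prefix size using the recursion $\S^\c(p)=\bigoplus_q \bar\S(q)$ over the children $q$ of $p$ together with $\bar\S(p)=\max(\S(p),\S^\c(p))$, concluding that the minimal positive prefixes form the optimal strike set. Your added bookkeeping about the standard-denominator convention and the observation that $\S(q)=0$ for ineligible $q$ are correct elaborations of details the paper's terser proof leaves implicit.
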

\begin{proof}
We first explain how to determine all of the strike probabilities in the prefix
tree using the formulas.  The prefixes of size $N$ are positive, which serves
as a base case for induction on the size of a prefix.  Given the probabilities
for prefixes of sizes greater than $i$, the $\S^\c(p)$ probabilities for each
prefix of size $i$ can be obtained as $\bigoplus_{p\text{-prefixed } q \text{
of size }i+1}\bar\S({q})$, and then the $\bar\S({p})$ probabilities can be
determined from the $\max$ formula.  (The $\oplus$ operation represents the
probability of winning on a disjoint union of sample spaces.)  This process is
essentially a combinatorial version of ``backwards induction.''

The positive prefixes $p$ are locally optimal (for the collection of $p$-prefixed
$\pi \in \SN_N$) by definition.  If a prefix $p$ has no proper prefix flattenings
that are positive, then $p$ must be part of the globally optimal strike set as well.
\end{proof}

\begin{example}
Consider the tree shown in Figure~\ref{f:treeex} for $N = 4$ at $\t = 1$.  The
strike probabilities $\S(p)$ are illustrated in the figure.  Since $\S(123) =
3/4 \geq 1/4 = \S^\c(123)$, we have that $123$ is a positive prefix.
Similarly, $\S(12) = 6/12 \geq 5/12 = \S^\c(12)$ so $12$ is also a positive
prefix.  On the other hand, $\S(1) = 6/24 < 11/24 = \S^\c(1)$ so $1$ is a
negative prefix.  The optimal strike set consists of $12$ (contributing
$6/12$), $213$ (contributing $3/4$), $3124$ (contributing $1/1$), and $3214$
(contributing $1/1$), together with the six prefixes of size $4$ that aren't
already related to one of these, each contributing a strike probability of
$0/1$.  The optimal probability is the $\oplus$-sum of these contributions,
namely $11/24$.
\end{example}

\section{Prefix equivariance and positional strategies}\label{s:tilt}

While any game of best choice has an optimal strike strategy, the classical game
(where $f(\pi) = 1/N!$ uniformly) is optimized by a positional strategy in which
the player rejects the first $k$ candidates and accepts the next left-to-right
maximum thereafter.  In this section, we give a concrete explanation for this
and generalize it to a class of weighted games.  The key idea is to use a
fundamental bijection in order to transport structure around the prefix tree.  

\begin{definition}
Let $\T^\c(p)$ be the {\bf open}
subforest of prefixes containing (but not equal to) $p$, and let $\bar\T({p})$ be the
{\bf closed} subtree of prefixes containing $p$, including $p$ itself.  
\end{definition}

\begin{definition}
Suppose that $p = [12 \cdots k]$ and let $\sigma_q$ be the permutation action that
rearranges the prefix $p$ to give some other prefix $q$ of size $k$.  We can
extend this to an action on $\bar\T({p})$, denoted $\sigma_q \cdot \pi$, by
similarly permuting the first $k$ entries and fixing the last $m-k$ entries of
$\pi$, where $m$ is the size of $\pi \in \bar\T({p})$.  Then $\sigma_q$ is a
bijection from $\bar\T({p})$ to $\bar\T({q})$.
\end{definition}

\begin{definition}
Suppose that the statistic $c$ satisfies $c(\pi) - c(\sigma_q \cdot \pi) = c(12
\cdots k) - c(q)$ for all prefixes $q$ and all $\pi \in \bar\T({12 \cdots
k})$.  Here, $k$ is the size of $q$.  Then, we say that $c$ is a {\bf prefix
equivariant} statistic.
\end{definition}

This condition essentially says that the change in the statistic $c(\pi)$ that
results from permuting the first $k$ entries of $\pi$ is the same as the change
that would result if we restricted $c$ to $k$ entries.  That is, the action of
permuting a prefix does not create or destroy any structure being counted by
$c$, beyond the entries of the prefix.  Hence, statistics that count
sufficiently local phenomena in permutations will be prefix equivariant.

\begin{example}
It is straightforward to check that $c(\pi) = \#$~left-to-right maxima in $\pi$
and $c(\pi) = \#$~inversions in $\pi$ are each prefix equivariant statistics.
Explicitly, if $\pi = \pi_1 \pi_2 \cdots \pi_k | \pi_{k+1} \pi_{k+2} \cdots
\pi_m$ has the form of an increasing block of size $k$ followed by an arbitrary
block, then we may observe that rearranging the first block may change the
number of left-to-right maxima within that block, but it cannot change the
left-to-right maximal status of any entry in the second block.  Similarly,
rearranging the first block may change the number of inversions within that
block, but it cannot add or remove an inversion pair where the smaller entry
lies in the second block.  On the other hand, $c(\pi) = \#321$-instances in
$\pi$ (i.e. triples of entries that are decreasing) is not prefix equivariant
because, for example, $c(2468|1357) = 0$ yet $c(4268|1357) = 1$ even though
$c(2468) = 0 = c(4268)$.
\end{example}

The following results are illustrated in Figure~\ref{f:treeex}.

\begin{theorem}\label{t:pres}
Suppose $c(\pi)$ is a prefix equivariant statistic.
For all prefixes $q$, the strike probabilities are
preserved under the restricted bijection $\sigma_q: \T^\c(12 \cdots k)
\rightarrow \T^\c(q)$, where $k$ is the size of $q$.  If $q$ is eligible then
these probabilities are also preserved under $\sigma_q: \bar\T({12 \cdots
k}) \rightarrow \bar\T({q})$.

Consequently, for $p \in \T^\c(12 \cdots k)$ (and additionally for $p = [12 \cdots k]$ if $q$ is
eligible), we have that
\begin{itemize}
    \item the $\S^\c(p)$ probabilities are preserved by $\sigma_q$, and
    \item the $\bar\S({p})$ probabilities are preserved by $\sigma_q$, and
    \item if $p$ and $q$ are eligible, we have $p$ is positive if and only if $\sigma_q \cdot p$ is positive.
\end{itemize}
\end{theorem}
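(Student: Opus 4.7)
The plan is to establish preservation of the strike probabilities $\S(p)$ under the bijection $\sigma_q$, and then propagate this to $\S^\c$ and $\bar\S$ via the recursive backwards-induction formulas from Section~\ref{s:setup}. Setting $\alpha = \t^{c([12\cdots k]) - c(q)}$, the prefix-equivariance hypothesis gives $\t^{c(\sigma_q \cdot \pi)} = \alpha^{-1} \t^{c(\pi)}$ uniformly for every $\pi \in \bar\T([12\cdots k])$. Because $\sigma_q$ is a size-preserving bijection that carries $p$-prefixed permutations to $(\sigma_q \cdot p)$-prefixed permutations, the standard denominator of $\S(\sigma_q \cdot p)$ is exactly $\alpha^{-1}$ times the denominator of $\S(p)$.

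The delicate step is preserving winnability at the decision position $|p|$. For $p$ strictly containing $[12\cdots k]$, so $|p| > k$, the action $\sigma_q$ fixes all entries in positions $> k$, so the $|p|$-th entry of $\sigma_q \cdot \pi$ equals $\pi_{|p|}$; hence winnability transports faithfully and the $\alpha^{-1}$ factor cancels in the ratio, giving $\S(p) = \S(\sigma_q \cdot p)$. For the boundary case $p = [12\cdots k]$ itself, the $k$th entry of any $[12\cdots k]$-prefixed $\pi$ is $\max(\pi_1, \ldots, \pi_k)$, whereas the $k$th entry of $\sigma_q \cdot \pi$ equals this same maximum precisely when $q$ ends in its own maximum, i.e., when $q$ is eligible. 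Under that hypothesis, $p$-winnability corresponds to $q$-winnability and the scaling argument again yields $\S([12\cdots k]) = \S(q)$.

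With strike probabilities handled, I would deduce preservation of $\S^\c$ and $\bar\S$ by downward induction on $|p|$. The base case $|p| = N$ has $\bar\S(p) = \S(p)$ and no children to aggregate. For the inductive step, $\sigma_q$ restricts to a bijection between the children of $p$ and the children of $\sigma_q \cdot p$, and each child's standard denominator is scaled by the same factor $\alpha^{-1}$; consequently the aggregation $\S^\c(p) = \bigoplus_{p'} \bar\S(p')$ is preserved, since $\oplus$ adds numerators and denominators separately and a uniform multiplicative constant factors out of both. Then $\bar\S(p) = \max(\S(p), \S^\c(p))$ transports immediately, and the positivity comparison $\S(p) \geq \S^\c(p)$ follows as a formal corollary whenever the previous two quantities are preserved at $p$.

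The principal bookkeeping hazard is the $\oplus$-aggregation step, but the uniformity of the scaling $\alpha^{-1}$ across the entire subtree is exactly what keeps this routine: a common constant factors out of every sum being combined. The place where the eligibility hypothesis for $q$ is genuinely needed is the root-level winnability correspondence at $p = [12\cdots k]$; without it, the bijection would fail to preserve the numerator of $\S([12\cdots k])$, and correspondingly one only gets preservation on the open subforest $\T^\c([12\cdots k])$ rather than on the full closed subtree $\bar\T([12\cdots k])$.
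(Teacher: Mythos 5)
Your proposal is correct and follows essentially the same route as the paper: prefix equivariance scales every weight in the subtree by the common factor $\theta^{c(q)-c(12\cdots k)}$, so each strike probability is a ratio of two sums scaled by the same constant, with eligibility of $q$ needed exactly to make winnability at the root $p=[12\cdots k]$ correspond, and the $\S^\c$ and $\bar\S$ statements then follow from the recursive $\oplus$ and $\max$ formulas. Your treatment is somewhat more explicit than the paper's (spelling out the winnability transport at positions $>k$ and organizing the propagation as a downward induction on $|p|$), but it is the same argument.
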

\begin{proof}
Fix $q$ to be any prefix of size $k$, and let $p \in \bar\T({12 \cdots k})$ with size $m$.  Then, 
$\S(p) = \frac{ \sum_{p\text{-winnable } \pi \in \SN_N} \theta^{c(\pi)} }{
\sum_{p\text{-prefixed } \pi \in \SN_N} \theta^{c(\pi)} }$.
Since the weights satisfy $c(\pi) - c(\sigma_q \cdot \pi) = c(12 \cdots k) -
c(q)$ for all $\pi \in \bar\T({12 \cdots k})$, we have 
\[ \S(\sigma_q \cdot p) = \frac{ \sum_{(\sigma_q \cdot p)\text{-winnable } \pi
\in \SN_N} \theta^{c(\pi)} }{ \sum_{(\sigma_q \cdot p)\text{-prefixed } \pi
\in \SN_N} \theta^{c(\pi)} } = \frac{ \sum_{p\text{-winnable } \pi \in \SN_N}
\theta^{c(\sigma_q \cdot \pi)} }{ \sum_{p\text{-prefixed } \pi \in \SN_N} \theta^{c(\sigma_q \cdot \pi)} } \]
which is $\frac{ \theta^{c(q) - c(12 \cdots k)} }{ \theta^{c(q) - c(12 \cdots
k)}} S(p)$ unless $q$ happens to be ineligible and $p = [12 \cdots k]$ in which case
$\sigma_q \cdot p = q$ and so $\S(q) = 0$.

The probability $\S^\c(p)$ is an $\oplus$-sum of strike probabilities, say
$\S^\c(p) = \S(r_1) \oplus \S(r_2) \oplus \cdots \oplus \S(r_n)$,
for some prefixes $r_i \in \bigcup_{j = m+1}^N \SN_j$.
Then, 
\[ \S^\c(\sigma_q \cdot p) = \S(\sigma_q \cdot r_1) \oplus \S(\sigma_q \cdot
r_2) \oplus \cdots \oplus \S(\sigma_q \cdot r_n) \]
\[     = \frac{ \theta^{c(q) - c(12 \cdots k)} }{ \theta^{c(q) - c(12 \cdots
k)}} \S(r_1) \oplus \frac{ \theta^{c(q) - c(12 \cdots k)} }{ \theta^{c(q) - c(12
\cdots k)}} \S(r_2) \oplus \cdots \oplus \frac{ \theta^{c(q) - c(12 \cdots k)}
}{ \theta^{c(q) - c(12 \cdots k)}} \S(r_n) \]
which is $\frac{ \theta^{c(q) - c(12 \cdots k)} }{ \theta^{c(q) - c(12 \cdots
k)}} \S^\c(p)$.

The other consequences now follow from applying the recursive formulas.
\end{proof}

Thus, it suffices to restrict our attention to subtrees lying under increasing
prefixes.  To avoid clutter in the remainder of the results, we abuse notation
to let $\S(p), \S^\c(p)$, and $\bar\S({p})$ each refer to their numerators over
the standard denominator $\sum_{p\text{-prefixed } \pi \in \SN_N}
\theta^{c(\pi)}$.  To ensure that this is valid, we avow that all of our
equalities will occur between quantities for which their implied denominators
agree.  

\begin{theorem}\label{t:pc}
For the increasing prefixes $p = [12 \cdots (k-1)]$ and $q = [12 \cdots k]$, we have
\[ \S^\c(p) = \bar\S({q}) + \S^\c(q) \sum_{\substack{\text{nontrivial
permutations} \\r \text{ of } q \text{ having } p \text{ as a prefix }}} \theta^{c(r)-c(q)}, \text{ and } \]
\[ \S(p) = \S(q) \sum_{\substack{\text{nontrivial permutations} \\r \text{ of }
q \text{ having } p \text{ as a prefix }}} \theta^{c(r)-c(q)}. \]
\end{theorem}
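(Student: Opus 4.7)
The plan is to decompose the $p$-prefixed permutations of $\SN_N$ according to their size-$k$ prefix flattening, which is some child of $p$ in the prefix tree. The children of $p$ are parameterized by the value $j \in \{1, \ldots, k\}$ placed at position $k$: the choice $j = k$ gives the unique eligible child $q = [12\cdots k]$, while each $j < k$ yields an ineligible child $r$ with $r_{k-1} = k$ and $r_k = j$, and these $k-1$ nontrivial children are exactly the permutations of $q$ appearing in the sum. A direct consequence of prefix equivariance is the scaling $\sum_{\pi \in \sigma_r(X)} \theta^{c(\pi)} = \theta^{c(r)-c(q)} \sum_{\pi' \in X} \theta^{c(\pi')}$ for any subset $X \subseteq \bar\T(q)$, since $c$ shifts by the constant $c(r) - c(q)$ under the action of $\sigma_r$.

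For the first identity I would argue by backward induction as in the proof of the optimal-strategy theorem of Section~\ref{s:setup}: the numerator of $\S^\c(p)$ decomposes as $\sum_s \bar\S(s)$ over children $s$ of $p$ (in the numerator abuse of notation), since the $p$-prefixed permutations partition according to their size-$k$ prefix. The child $q$ contributes $\bar\S(q)$; for each nontrivial $r$ we have $\S(r) = 0$ by ineligibility, hence $\bar\S(r) = \S^\c(r)$, which by the scaling above equals $\theta^{c(r)-c(q)} \S^\c(q)$ in numerator form (apply the scaling to the set of $q$-prefixed permutations won by optimal play after rejecting $q$). Summing over the nontrivial $r$ yields the stated formula.

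For the second identity I would construct an explicit bijection $\pi \mapsto (r, \sigma_r^{-1} \cdot \pi)$ between $p$-winnable permutations $\pi \in \SN_N$ and pairs $(r, \pi')$ where $r$ is a nontrivial child of $p$ and $\pi'$ is $q$-winnable, with $r$ the size-$k$ prefix flattening of $\pi$. The key verification is that the winnability condition $\pi_{k-1} = N$ forces $r_{k-1} = k$ and $r_k < k$ (so $r$ is indeed nontrivial), and that $\sigma_r^{-1}$ sorts the first $k$ entries of $\pi$ into increasing order, placing $N$ at position $k$, so that $\sigma_r^{-1} \cdot \pi$ is $q$-winnable; the inverse is $(r, \pi') \mapsto \sigma_r \cdot \pi'$. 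Prefix equivariance then yields $c(\pi) = c(\pi') + c(r) - c(q)$, so summing $\theta^{c(\pi)}$ over $p$-winnable $\pi$ factors as $\sum_r \theta^{c(r)-c(q)} \S(q)$ in numerator form. The main subtlety throughout is the ``numerator over standard denominator'' bookkeeping across prefixes of different sizes, which is resolved uniformly by the scaling from prefix equivariance and keeps the implied denominators of both sides equal to $\sum_{p\text{-prefixed }\pi} \theta^{c(\pi)}$.
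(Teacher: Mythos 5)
Your proof is correct and follows essentially the same route as the paper's: decompose by the $k$ children of $p$ (the unique eligible one being $q$, the $k-1$ ineligible ones being the nontrivial rearrangements $r$), transport the subtrees via the $\sigma_r$ bijections, and let prefix equivariance supply the factor $\theta^{c(r)-c(q)}$. You simply make explicit several details the paper leaves implicit, such as the verification that $\S(r)=0$ for the ineligible children and that the bijection for the second identity moves $N$ from position $k$ to position $k-1$.
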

\begin{proof}
There are $k$ children of $p$ in the prefix tree; they are distinguished by
their value in the last position.  The prefix $q$ itself is an eligible child
so $\bar\S({q})$ is the optimal probability for the subtree rooted at $q$.  The subtrees
under each of the other $k-1$ children of $p$ are isomorphic to $\T^\c(q)$ via
the bijection $\sigma_r$ where $r$ is a nontrivial permutation of $q$ having
$p$ as a prefix.  For each $\pi \in \SN_N$ that is won in $\S^\c(q)$, we have
$\theta^{c(\sigma_r \cdot \pi)} = \theta^{c(\pi)} \theta^{c(r)-c(q)}$ by prefix
equivariance, and the first result follows.

The second result is similar.  First, observe that none of the $q$-prefixed
$\pi \in \SN_N$ are $p$-winnable.  Each of the $p$-winnable permutations arises
by applying one of the $\sigma_r$ to a $q$-winnable permutation $\pi$.  This
has the effect of placing the value $N$ into position $k-1$, as desired.
Prefix equivariance produces a factor of $\theta^{c(r)-c(q)}$ for each choice
of $\sigma_r$.
\end{proof}

\begin{corollary}\label{c:t1}
For any increasing prefixes $p = [12 \cdots (k-1)]$ and $q = [12 \cdots k]$, we
have that if $q$ is negative then $p$ is negative.
\end{corollary}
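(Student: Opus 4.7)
The plan is to obtain Corollary~\ref{c:t1} as a direct algebraic consequence of Theorem~\ref{t:pc}, where the negativity hypothesis on $q$ collapses the $\max$ hidden inside $\bar\S({q})$. I will write $T = \sum_r \theta^{c(r) - c(q)}$ for the common weighted sum over the nontrivial permutations of $q$ having $p$ as a prefix that appears in both parts of Theorem~\ref{t:pc}, so that the two formulas become
\[ \S^\c(p) = \bar\S({q}) + T \cdot \S^\c(q), \qquad \S(p) = T \cdot \S(q). \]

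The first step is to exploit the hypothesis. Since $q$ is negative, $\S(q) < \S^\c(q)$, and hence $\bar\S({q}) = \max(\S(q), \S^\c(q)) = \S^\c(q)$. Substituting this into the formula for $\S^\c(p)$ and subtracting,
\[ \S^\c(p) - \S(p) = \S^\c(q) + T\bigl(\S^\c(q) - \S(q)\bigr). \]
The factor $T$ is strictly positive whenever $k \geq 2$ because there are $k-1$ nontrivial permutations $r$ and each contributes a positive weight $\theta^{c(r)-c(q)}$. The bracketed difference is strictly positive by the negativity of $q$, and since $\S(q) \geq 0$ we also get $\S^\c(q) > 0$ from $\S^\c(q) > \S(q) \geq 0$. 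Both terms on the right are therefore nonnegative and at least one is strictly positive, so $\S(p) < \S^\c(p)$, giving that $p$ is negative.

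This argument is essentially a one-liner once Theorem~\ref{t:pc} is available; the only step requiring care is bookkeeping. Because Theorem~\ref{t:pc} is stated under the "numerator" convention announced just before it, I would verify during the write-up that the entire chain of equalities and the final strict inequality take place over the shared $p$-prefixed standard denominator, so that the strict numerator inequality $\S(p) < \S^\c(p)$ legitimately encodes the strict probability inequality defining negativity. Beyond this accounting, there is no real obstacle: the content of the corollary is that negativity passes back to the parent increasing prefix, and this is forced by $\bar\S({q})$ being pinned to $\S^\c(q)$ rather than to $\S(q)$ under the hypothesis.
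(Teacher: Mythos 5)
Your proposal is correct and follows essentially the same route as the paper: both deduce the result from Theorem~\ref{t:pc} together with $\bar\S(q) = \S^\c(q) > \S(q) \geq 0$, differing only in whether one computes the difference $\S^\c(p) - \S(p)$ directly or chains the inequalities $\S^\c(p) > \S(q)(1+T) = \S(q) + \S(p) \geq \S(p)$. Your remark about the shared standard denominator for the final comparison of $\S(p)$ with $\S^\c(p)$ is exactly the bookkeeping the paper's convention is designed to handle.
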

\begin{proof}
Suppose $q$ is negative so $\S^\c(q) > \S(q)$.  Then, by
Theorem~\ref{t:pc} we have
\[ \S^\c(p) = \bar\S({q}) + \S^\c(q) \sum_{\substack{\text{nontrivial permutations} \\r \text{ of } q \text{ having } p \text{ as a prefix }}} \theta^{c(r)-c(q)} \]
\[ > \S(q) \left( 1 + \sum_{\substack{\text{nontrivial permutations} \\r \text{ of } q \text{ having } p \text{ as a prefix }}} \theta^{c(r)-c(q)} \right) =  \S(q) + \S(p)  \geq \S(p), \]
so $p$ is negative as well.
\end{proof}

\begin{theorem}\label{t:main}
For a weighted game of best choice defined using a prefix equivariant
statistic, the optimal strategy is positional.
\end{theorem}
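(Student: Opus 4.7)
The plan is to show that the optimal strike set coincides with the strike set of a positional strategy defined by a single threshold $k^{*}$, namely the smallest size at which the increasing prefix $[12\cdots k^{*}]$ becomes positive.

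First, I would observe that among eligible prefixes, positivity depends only on size. For any eligible prefix $q$ of size $k$, Theorem~\ref{t:pres} gives a bijection $\sigma_q: \bar\T(12\cdots k) \to \bar\T(q)$ that preserves strike probabilities, and in particular preserves positivity at the root; hence $q$ is positive if and only if $[12\cdots k]$ is positive. Next, Corollary~\ref{c:t1} says that if $[12\cdots k]$ is negative then $[12\cdots(k-1)]$ is negative, so contrapositively, positivity of increasing prefixes propagates upward in length. Since the base case in the proof of the strike-set theorem of Section~\ref{s:setup} shows that every size-$N$ prefix is positive, there is a smallest $k^{*} \leq N$ such that $[12\cdots k]$ is positive for $k \geq k^{*}$ and negative for $k < k^{*}$. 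Combining the two facts, every eligible prefix $p$ is positive if and only if $|p| \geq k^{*}$.

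Now the globally optimal strike set consists of the minimal positive prefixes in the prefix-containment order. Among eligible prefixes these are exactly the $p$ with $|p| \geq k^{*}$ that have no proper prefix flattening that is both eligible and of size $\geq k^{*}$. Unwinding, this means striking at the first position $i \geq k^{*}$ at which candidate $i$ is a left-to-right maximum, and otherwise striking at position $N$ by default since every size-$N$ prefix is eligible and positive. That is precisely a positional strategy with threshold $k = k^{*} - 1$.

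The main obstacle is the bookkeeping around eligibility and validity: in principle an ineligible prefix $p$ could be \emph{positive} in the weak sense $\S(p) \geq \S^{\c}(p)$, but this forces $\S(p) = \S^{\c}(p) = 0$ and hence contributes nothing to the win probability, so such $p$ can be replaced harmlessly by its eligible positive descendants (which exist because size-$N$ prefixes are always eligible and positive). Verifying this, and checking that the resulting set of minimal positive eligible prefixes forms a valid strike set covering every permutation, is the routine part once the threshold $k^{*}$ has been identified.
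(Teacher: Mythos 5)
Your proposal is correct and follows essentially the same route as the paper's proof: use Corollary~\ref{c:t1} to get a threshold size $k^{*}$ separating negative from positive increasing prefixes, then transport this to all eligible prefixes via the $\sigma_q$ bijections of Theorem~\ref{t:pres}, so the minimal positive prefixes are exactly those realizing the positional strategy with $k = k^{*}-1$ rejections. The extra bookkeeping you supply (ineligible prefixes have $\S(p)=0$ and so never enter the minimal positive antichain, and the size-$N$ base case guarantees coverage) is implicit in the paper's argument and is handled consistently with it.
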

\begin{proof}
By Corollary~\ref{c:t1}, there exists some $k$ such that all of the increasing
prefixes with size less or equal to $k$ are negative, and all of the increasing
prefixes with size greater than $k$ are positive.  Applying the $\sigma_q$
isomorphisms, the same $k$ also serves to separate positive and negative
eligible prefixes in the rest of the tree by Theorem~\ref{t:pres}.  Hence, the
optimal strike strategy coincides with the positional strategy that rejects the
first $k$ candidates and accepts the next left-to-right maximum thereafter.
\end{proof}

\begin{remark}\label{r:use}
Knowing that the optimal strategy in a game of best choice is positional
simplifies the analysis considerably.  Rather than recursively traversing the
entire prefix tree to find the optimal strike set, which could conceivably be
any antichain, we know that the optimum must be one of the $N$ strategies
obtained by rejecting $k = 0, 1, 2, \ldots$, or $N-1$ candidates and accepting
the next maximum.  Then, one can write an equation to determine $k$.
\end{remark}

\begin{remark}\label{r:hist}
It is interesting to compare the proof above at $\t = 1$ with arguments from the
literature on the classical secretary problem.  While numerous papers either
assume or claim that the optimal strategy is positional, few of them actually
provide or reference a complete proof.  As far as we can tell,
there are essentially two elementary arguments for the result.  One appears as
part of a “reminiscence” in \cite{kadison} that develops a careful proof using
convexity, along different lines than ours, but has not been widely cited
(although Ron Graham is acknowledged in the postscript).  The other is a
conditioning argument, apparently originating in \cite{palermo} (unpublished)
and very loosely summarized by \cite{gilbert--mosteller}.  Once symmetry (as in
Theorem~\ref{t:pres}) is established, this argument compares the conditional
probabilities for winning if we accept the current candidate (``stay'') versus
if we consider eligible candidates later in the game (``go'').  The argument
claims that the ``stay'' probabilities are increasing, which can be verified
directly by counting. The argument also claims that the ``go'' probabilities are
decreasing, but counting is recursive in this case (corresponding to our
Theorem~\ref{t:pc}) and is complicated by the fact that ``stay'' and ``go'' are
not complementary probabilities.
\end{remark}

\section{Precise results for Ewens distribution}\label{s:ew}

When we weight by $c(\pi) = \#$ left-to-right maxima in $\pi$, we obtain the
Ewens distribution.  In this section, we work out the optimal best choice
strategy for all $N$.

\begin{definition}
Let $\stir{N}$ be the polynomial in $\t$ defined by 
\[ \stir{N} =  \t(\t+1)(\t+2) \cdots (\t+(N-1)). \]
\end{definition}

The following result justifies our ``$\t$-analogue'' notation.

\begin{lemma}
We have
\[ \stir{N} = \sum_{\pi \in \SN_N} \theta^{\# \text{left-to-right maxima in } \pi}. \]
Hence, the coefficients of $\t$ in $\stir{N}$ are Stirling numbers (of the type used to count permutations by number of cycles).
\end{lemma}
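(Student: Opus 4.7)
The plan is to prove this by induction on $N$ via a ``delete the entry $1$'' bijection between $\SN_N$ and $\SN_{N-1} \times \{1,2,\ldots,N\}$. The base case $N=1$ is immediate: $\stir{1} = \t$ and the unique permutation $[1] \in \SN_1$ has exactly one left-to-right maximum.

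For the inductive step, given $\pi \in \SN_N$, let $j$ be the position of the entry $1$ in $\pi$ and let $\pi' \in \SN_{N-1}$ be the permutation obtained by deleting that entry $1$ from $\pi$ and decrementing each remaining entry by one. The map $\pi \mapsto (\pi', j)$ is a bijection. The key claim is that, writing $c$ for the number-of-left-to-right-maxima statistic, $c(\pi) = c(\pi')+1$ if $j=1$, and $c(\pi) = c(\pi')$ otherwise. Indeed, $1$ is itself a left-to-right maximum of $\pi$ only when it appears in position $1$; and because $1$ is the global minimum entry, deleting it from any other position cannot alter whether any remaining entry exceeds all entries to its left.

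Summing over all $\pi \in \SN_N$ by first fixing $\pi'$ and then summing over the $N$ possible values of $j$ yields
\[ \sum_{\pi \in \SN_N} \t^{c(\pi)} = \sum_{\pi' \in \SN_{N-1}} \Bigl( \t^{c(\pi')+1} + (N-1)\, \t^{c(\pi')} \Bigr) = (\t + N - 1) \sum_{\pi' \in \SN_{N-1}} \t^{c(\pi')}, \]
which by the inductive hypothesis equals $(\t + N - 1) \stir{N-1} = \stir{N}$, as desired.

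The only step needing care is the bookkeeping claim about how left-to-right maxima transform under deletion of the entry $1$, and this is entirely elementary once one notes that $1$ is the unique minimum. The Stirling number interpretation is then an immediate consequence of the classical identity $x(x+1)(x+2)\cdots(x+N-1) = \sum_k c(N,k)\, x^k$, where $c(N,k)$ is the number of permutations of $[N]$ with exactly $k$ cycles.
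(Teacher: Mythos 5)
Your proof is correct. Both your argument and the paper's are one-step inductions that realize the recurrence $\stir{N} = (\t + N - 1)\stir{N-1}$ combinatorially, differing only in the bookkeeping device: the paper classifies $\pi \in \SN_N$ by the value placed in the last position (appending $N$ creates exactly one new left-to-right maximum, contributing $\t\stir{N-1}$, while appending any of the other $N-1$ values creates none, contributing $(N-1)\stir{N-1}$), whereas you classify by the position of the entry $1$ (which is a left-to-right maximum precisely when it sits in position $1$ and, being the global minimum, never affects the status of any other entry). These are dual insertion/deletion decompositions yielding the identical recurrence, and your justification of the key claim is, if anything, slightly more explicit than the paper's one-line version. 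For the Stirling-number remark, you invoke the classical identity $x(x+1)\cdots(x+N-1) = \sum_k c(N,k)x^k$ directly, while the paper instead cites R\'enyi's bijection equating the number of cycles with the number of left-to-right maxima; both routes are standard and complete.
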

\begin{proof}
This is straightforward to prove using induction since we may extend each
permutation $\pi$ of $N-1$ by placing one of the values $1, 2, \ldots, N-1$ in
the last position and arranging the complementary values according to $\pi$
(this does not create a new left-to-right maximum so contributes $(N-1)
\stir{N-1}$ to $\stir{N}$) or by simply appending the value $N$ to the last
position of $\pi$ (which does create a new left-to-right maximum, so
contributes $\t \stir{N-1}$ to $\stir{N}$).

The equivalence between the number of cycles and number of left-to-right maxima
(attributed to R\'enyi) is accomplished by writing the cycle notation for a
permutation using the maximum element in a cycle as the starting point and then
arranging the cycles with increasing maximum elements.  
\end{proof}

We are primarily interested in 
\[ W(N,k) = \sum_{k\text{-winnable } \pi \in \SN_N} \t^{\#\text{left-to-right maxima in } \pi}. \]
Here, we say that $\pi$ is {\bf $k$-winnable} if it would be won by the
positional strategy that rejects the first $k$ candidates and accepts the next
left-to-right maximum thereafter.  Some examples of these polynomials are given
in Figure~\ref{f:W}.  Our next result provides a recursive description for them.

\begin{theorem}\label{t:W}
We have
\[ W(N,k) = (N-1) W(N-1,k) + \frac{(N-2)!}{(k-1)!} \t \stir{k} \]
with initial conditions $W(1, 0) = \t$ and $W(N,N) = 0$.
\end{theorem}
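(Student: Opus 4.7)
The plan is to prove the recursion combinatorially by splitting the sum defining $W(N,k)$ into two subcases depending on whether $\pi_N = N$ or not. This parallels the two-case argument used in the previous lemma to establish the recursion for $\stir{N}$, and is the natural way to reduce a statistic on $\SN_N$ to the corresponding statistic on $\SN_{N-1}$.

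First I would handle the case $\pi_N \neq N$. For each such $\pi$, set $v = \pi_N \in \{1, \ldots, N-1\}$ and let $\pi' \in \SN_{N-1}$ be the flattening of $\pi_1, \pi_2, \ldots, \pi_{N-1}$. This gives a bijection between $\{\pi \in \SN_N : \pi_N \neq N\}$ and $\SN_{N-1} \times \{1, \ldots, N-1\}$. The two things I need to verify are: (i) the value $v$ is not a left-to-right maximum of $\pi$ because $N$ appears somewhere earlier, so the number of left-to-right maxima of $\pi$ equals the number of left-to-right maxima of $\pi'$; and (ii) $\pi$ is $k$-winnable if and only if $\pi'$ is, since the relative rankings in positions $1$ through $N-1$ coincide under flattening, and the global maximum sits in one of those positions for both $\pi$ (value $N$) and $\pi'$ (value $N-1$). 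Hence this case contributes exactly $(N-1) W(N-1,k)$.

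Next I would handle the case $\pi_N = N$. Since the strategy rejects the first $k$ candidates and $N$ sits at the very end, $\pi$ is $k$-winnable precisely when no left-to-right maximum appears in positions $k+1, \ldots, N-1$, equivalently when the value $N-1$ (the maximum of $\pi_1, \ldots, \pi_{N-1}$) lies in one of the first $k$ positions. Let $\tau \in \SN_k$ denote the flattening of $\pi_1, \ldots, \pi_k$; the left-to-right maxima of $\pi$ are then those of $\tau$ together with the terminal $N$, so $\pi$ contributes $\t \cdot \t^{c(\tau)}$ where $c(\tau)$ counts left-to-right maxima of $\tau$. For each $\tau \in \SN_k$, the corresponding $\pi$ are obtained by choosing a $k$-subset of $\{1, \ldots, N-1\}$ containing $N-1$ to fill the first $k$ positions (giving $\binom{N-2}{k-1}$ choices), then arranging the remaining $N-1-k$ values arbitrarily in positions $k+1, \ldots, N-1$. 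Summing gives
\[ \binom{N-2}{k-1} (N-1-k)! \cdot \t \cdot \stir{k} \;=\; \frac{(N-2)!}{(k-1)!} \t \stir{k}, \]
which is exactly the desired second summand.

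The initial conditions $W(1,0) = \t$ and $W(N,N) = 0$ are immediate from the definition (the only permutation of length one is a single left-to-right maximum, and rejecting all $N$ candidates makes no permutation $k$-winnable). The main delicacy I expect is in Case 1, where one must confirm carefully that $k$-winnability is genuinely preserved under flattening — the strategy must accept at the same position of $\pi$ and $\pi'$, and this uses essentially that $v \neq N$ so that the maximum of $\pi$ still occurs among the first $N-1$ positions. The Case 2 computation, by contrast, should fall out cleanly once one identifies the structural constraint forcing $N-1$ into the initial reject window.
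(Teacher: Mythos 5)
Your proposal is correct and follows essentially the same argument as the paper: split the $k$-winnable permutations according to whether the last entry is $N$, reduce the first case to $W(N-1,k)$ by flattening, and count the second case by placing $N-1$ among the first $k$ positions and weighting the prefix by $\t\stir{k}$. The extra care you take in verifying that flattening preserves $k$-winnability is a welcome elaboration of a step the paper leaves implicit, but it is the same proof.
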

\begin{proof}
We have two cases for the $k$-winnable permutations $\pi \in \SN_{N}$.
\begin{itemize}
    \item If the last position contains one of the values $1, 2, \ldots, N-1$,
        then it is not a left-to-right maximum and 
        we may view the complementary values as some $k$-winnable
        $\widetilde{\pi} \in \SN_{N-1}$.  Hence, these
        contribute $(N-1) W(N-1,k)$ to $W(N,k)$.
    \item If the last position contains $N$ then it is a left-to-right maximum
        and the value $N-1$ must lie in one of the first $k$ positions in order
        for $\pi$ to be $k$-winnable.  We can choose the rest of the values to
        place among the first $k$ positions in ${ {N-2} \choose {k-1}}$ ways and
        then permute them, keeping track of the number of left-to-right maxima
        with $\stir{k}$.  For each of these, we may also then permute the rest
        of the entries in positions $k+1, k+2, \ldots, N-1$ in $(N-k-1)!$ ways.
        All together, these contribute
        \[ \t^1 { {N-2} \choose {k-1}} \stir{k} (N-k-1)! = \frac{(N-2)!}{(k-1)!}
        \t\stir{k} \]
        to $W(N,k)$.
\end{itemize}
The initial conditions are immediate.
\end{proof}

\begin{figure}[t]
    \scalebox{0.8}{
$\begin{array}{llllllllllllll}
N & k=0 & k=1 & k=2 & k=3 & k=4 & k=5 \\
\hline \\
1 & \t & \\
2 & \t & \t^2 & \\
3 & 2\t & 3\t^2 & \t^3 + \t^2 & \\
4 & 6\t & 11\t^2 & 5\t^3 + 5\t^2 & \t^4 + 3\t^3 + 2\t^2 & \\
5 & 24\t & 50\t^2 & 26\t^3 + 26\t^2 & 7\t^4 + 21\t^3 + 14\t^2 & \t^5 + 6\t^4 + 11\t^3 + 6\t^2 & \\
6 & 120\t & 274\t^2 & 154\t^3 + 154\t^2 & 47\t^4 + 141\t^3 + 94\t^2 & 9\t^5 + 54\t^4 + 99\t^3 + 54\t^2 & \t^6 + 10\t^5 + 35\t^4 + 50\t^3 + 24\t^2 & \\
\end{array}$ }
\caption{Some $W(N,k)$ polynomials}\label{f:W}
\end{figure}

Now, let $\DW(N,k) = W(N,k+1) - W(N,k)$.  The zeros of these polynomials will determine
the intervals of $\t$ that produce games for which a given positional strategy is
optimal.  We begin by translating the recurrence.

\begin{corollary}\label{c:Wrec}
We have
\[ \DW(N,k) = (N-1) \DW(N-1,k) + \t^2 \frac{(N-2)!}{k!} \stir{k} \]
with initial conditions $\DW(N,N-2) = \t(\t-(N-1))\stir{N-2}$.
\end{corollary}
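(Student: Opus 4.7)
The plan is to derive the recurrence by straight algebraic manipulation of Theorem~\ref{t:W}, applied twice. Writing out $W(N,k+1)$ and $W(N,k)$ from the recurrence and subtracting gives
\[ \DW(N,k) = (N-1)\DW(N-1,k) + \t\left(\frac{(N-2)!}{k!}\stir{k+1} - \frac{(N-2)!}{(k-1)!}\stir{k}\right), \]
so everything comes down to simplifying the parenthesized expression. The key identity is the one-step factorization $\stir{k+1} = (\t+k)\stir{k}$, which is immediate from the definition $\stir{k} = \t(\t+1)\cdots(\t+(k-1))$. Substituting this and pulling out a common factor of $\frac{(N-2)!}{k!}\stir{k}$ collapses the bracket to $(\t+k) - k = \t$, leaving precisely the claimed correction term $\t^2 \frac{(N-2)!}{k!}\stir{k}$.

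For the initial condition, I would first use Theorem~\ref{t:W} together with the base case $W(N,N)=0$ to read off two values in closed form. Setting $k=N-1$ in the recurrence gives $W(N,N-1) = \t\stir{N-1}$, and setting $k=N-2$ gives $W(N,N-2) = (N-1)\t\stir{N-2} + (N-2)\t\stir{N-2} = (2N-3)\t\stir{N-2}$, after using the just-derived formula at index $N-1$ in the smaller problem. Subtracting and applying $\stir{N-1} = (\t + N-2)\stir{N-2}$ rearranges to $\t\stir{N-2}\bigl(\t - (N-1)\bigr)$, which is the stated initial condition.

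I do not expect a real obstacle here; the argument is just careful bookkeeping of factorial and Pochhammer factors, and the only substantive input beyond Theorem~\ref{t:W} is the trivial recursion $\stir{k+1} = (\t+k)\stir{k}$. The mildly delicate point is making sure the index shifts are consistent — in particular keeping $(k-1)!$ versus $k!$ straight when the recurrence in Theorem~\ref{t:W} is instantiated at $k$ and at $k+1$, and handling the boundary value $W(N,N-1)$ via the same recurrence rather than treating it as a separate base case.
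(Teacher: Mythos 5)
Your proposal is correct and follows essentially the same route as the paper: subtract two instances of Theorem~\ref{t:W}, use $\stir{k+1} = (\t+k)\stir{k}$ to collapse the bracket to a factor of $\t$, and obtain the initial condition by subtracting $W(N,N-2) = (2N-3)\t\stir{N-2}$ from $W(N,N-1) = \t\stir{N-1}$. Your write-up is in fact slightly more explicit than the paper's about where those two boundary values come from.
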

\begin{proof}
By Theorem~\ref{t:W}, we have
\[ \DW(N,k) = W(N,k+1) - W(N,k) = (N-1) \left(W(N-1,k+1) - W(N-1,k)\right)  \]
\[    + \t (N-2)! \left( \frac{ \stir{k+1} }{ k! } - \frac{\stir{k}}{(k-1)!} \right)  \]
\[ = (N-1) \DW(N-1,k) + \t \frac{(N-2)!}{k!} \stir{k} \left( (\t + k) - k \right)  \]
yielding the result.

The initial conditions follow by subtracting $W(N,N-2) = (2N-3) \t \stir{N-2}$
from $W(N,N-1) = \t \stir{N-1}$.
\end{proof}

It turns out that we can solve this recurrence.

\begin{theorem}\label{t:dwmain}
We have 
\[ \DW(N,k) = c_1(N,k) \left( \left(\sum_{i=k+1}^{N-1} \frac{1}{i}\right)\t - 1 \right) \t \stir{k} \]
for $c_1(N,k) = \prod_{j=k+1}^{N-1} j$ which is constant in $\t$.  Hence, $\DW$ has only real roots.
Moreover, the only positive root of $\DW(N,k)$ occurs at
\[ \t = 1 \Big/ \left(\sum_{i=k+1}^{N-1} \frac{1}{i}\right). \]
\end{theorem}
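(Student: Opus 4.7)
The plan is straightforward induction on $N$ with $k$ held fixed, using the recurrence and base case from Corollary~\ref{c:Wrec}; the root count then falls out directly from the closed form.

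For the base case $N = k+2$, I would plug into the proposed expression: $c_1(k+2,k) = k+1$ and the inner sum reduces to $\sum_{i=k+1}^{k+1} 1/i = 1/(k+1)$, so the formula evaluates to $(k+1)\bigl(\t/(k+1) - 1\bigr)\t\stir{k} = \t(\t - (k+1))\stir{k}$, matching the initial condition $\DW(k+2,k) = \t(\t-(k+1))\stir{k}$ of Corollary~\ref{c:Wrec}.

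For the inductive step, I would assume the formula holds at $N-1$ and substitute into
\[ \DW(N,k) = (N-1)\DW(N-1,k) + \t^2\frac{(N-2)!}{k!}\stir{k}. \]
After factoring $\frac{(N-1)!}{k!}\t\stir{k}$ out of both summands (using $(N-1)\cdot(N-2)! = (N-1)!$ on the first term, and rewriting $\t^2(N-2)!/k! = \frac{(N-1)!}{k!}\cdot\frac{\t}{N-1}\cdot\t$ on the second), the bracketed expression becomes
\[ \left(\sum_{i=k+1}^{N-2}\frac{1}{i}\right)\t - 1 + \frac{\t}{N-1} = \left(\sum_{i=k+1}^{N-1}\frac{1}{i}\right)\t - 1, \]
which is exactly the linear factor appearing in the target formula. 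The only real content is the telescoping identity $\sum_{i=k+1}^{N-2}1/i + 1/(N-1) = \sum_{i=k+1}^{N-1}1/i$, so this step is mostly bookkeeping.

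For the root analysis, the factored form displays $\DW(N,k)$ as a nonzero constant multiple of $\t \cdot \stir{k} \cdot \bigl((\sum_{i=k+1}^{N-1} 1/i)\t - 1\bigr)$. Since $\stir{k} = \t(\t+1)(\t+2)\cdots(\t+(k-1))$ factors completely over $\mathbb{R}$ with roots $0, -1, \ldots, -(k-1)$, and the remaining linear factor has its unique root at $\t = 1/\sum_{i=k+1}^{N-1} 1/i$, all roots of $\DW(N,k)$ are real and exactly one of them is positive. I do not anticipate any real obstacle: once the closed form is verified by induction, the factorization makes the root statements immediate.
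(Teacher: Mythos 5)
Your proof is correct and follows essentially the same route as the paper: induction on $N$ with $k$ held fixed, using the recurrence and initial condition of Corollary~\ref{c:Wrec}, with the root statements read off from the resulting factorization. Your base case at $N = k+2$ also quietly corrects the paper's typo (``$N = k-2$''), and your handling of the linear factor via $c_1(N,k) = (N-1)!/k!$ is a cleaner piece of bookkeeping than the paper's explicit sum-of-products manipulation.
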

\begin{proof}
We fix $k$ and argue by induction on $N$.  When $N = k-2$, we apply the initial
condition in Corollary~\ref{c:Wrec} with $c_1(N,N-2) = (N-1)$.

Now suppose the result holds for $N-1$.  Then, by Corollary~\ref{c:Wrec}
\[ \DW(N,k) = (N-1) \left(  c_1 \left( \left(\sum_{i=k+1}^{N-2} \frac{1}{i}\right)\t - 1 \right) \t \stir{k} \right) + \t^2 \frac{(N-2)!}{k!} \stir{k} \]
\[ = \left(  (N-1) c_1 \left( \left(\sum_{i=k+1}^{N-2} \frac{1}{i}\right)\t - 1 \right) + \t \frac{(N-2)!}{k!} \right) \t \stir{k} \]
\[ = (N-1) c_1 \left( \left( \frac{(N-2)(N-3) \cdots (k+1)}{(N-1) c_1} + \frac{\sum\limits_{k+1 \leq i \leq N-2}\ \ \prod\limits_{\substack{k+1 \leq j \leq N-2 \\ j \neq i}} j }{(N-2)(N-3) \cdots (k+1)} \right) \t - 1 \right) \t \stir{k}. \]
Now, if we let $c_1 = (N-2)(N-3) \cdots (k+1)$, we may rewrite the linear term as
\[ \left( \frac{(N-2)(N-3) \cdots (k+1)}{(N-1)(N-2)(N-3) \cdots (k+1)} + \frac{(N-1)\sum\limits_{k+1 \leq i \leq N-2}\ \ \prod\limits_{\substack{k+1 \leq j \leq N-2 \\ j \neq i}} j }{(N-1)(N-2)(N-3) \cdots (k+1)} \right) \t - 1  = \left( \sum_{i=k+1}^{N-1} \frac{1}{i}\right)\t - 1 \]
obtaining $(N-1) c_1 \left( \left( \sum_{i=k+1}^{N-1} \frac{1}{i}\right)\t - 1
\right) \t \stir{k}$ as desired.
\end{proof}

\begin{corollary}\label{c:ewensmain}
We have
\[ \k_N(\t) = \begin{cases} 
        0 & \text{ if } 0 < \t \leq \left(\sum_{i=1}^{N-1} \frac{1}{i}\right)^{-1} \\
        k & \text{ if }  \left(\sum_{i=k}^{N-1} \frac{1}{i}\right)^{-1} < \t \leq \left(\sum_{i=k+1}^{N-1} \frac{1}{i}\right)^{-1} \\
        N-1 & \text{ if } \t > N-1.
\end{cases} \]
\end{corollary}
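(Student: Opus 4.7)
The plan is to combine Theorem~\ref{t:main} with Theorem~\ref{t:dwmain} and read off the maximizer of $W(N,k)$ directly from the sign pattern of the first difference $\DW(N,k) = W(N,k+1) - W(N,k)$. By Theorem~\ref{t:main}, the optimal strategy for the Ewens game is positional, so $\k_N(\t)$ is the integer $k \in \{0,1,\ldots,N-1\}$ that maximizes (a positive scalar multiple of) $W(N,k)$, and identifying the maximizer reduces to knowing which adjacent differences are positive, zero, or negative.

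First, Theorem~\ref{t:dwmain} gives
\[ \DW(N,k) = c_1(N,k)\left(\left(\sum_{i=k+1}^{N-1}\frac{1}{i}\right)\t - 1\right)\t\,\stir{k}, \]
and the factors $c_1(N,k)$, $\t$, and $\stir{k}$ are all strictly positive for $\t>0$.  Hence the sign of $\DW(N,k)$ agrees with that of its linear factor in $\t$, giving $\DW(N,k) > 0$, $= 0$, or $< 0$ exactly when $\t > \t_k$, $\t = \t_k$, or $\t < \t_k$ respectively, where I write $\t_k := \bigl(\sum_{i=k+1}^{N-1} 1/i\bigr)^{-1}$ for the unique positive root identified in Theorem~\ref{t:dwmain}.

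The key step is the monotonicity $\t_0 < \t_1 < \cdots < \t_{N-2} = N-1$, which holds because the harmonic tails $\sum_{i=k+1}^{N-1} 1/i$ strictly decrease as $k$ ranges over $\{0, 1, \ldots, N-2\}$.  For any fixed $\t > 0$, this monotonicity forces the sequence $\DW(N,0), \DW(N,1), \ldots, \DW(N,N-2)$ to exhibit at most one sign change (from positive to non-positive as $k$ increases), which immediately implies that $W(N,\cdot)$ is unimodal in $k$ with a well-defined maximizing index.

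Reading off the location of this sign change then yields the piecewise formula.  If $\t \leq \t_0$, every $\DW(N,k)$ is non-positive, $W(N,\cdot)$ is non-increasing, and the maximum sits at $k=0$; if $\t > N-1 = \t_{N-2}$, every $\DW(N,k)$ is positive, $W(N,\cdot)$ is strictly increasing, and the maximum sits at $k=N-1$; and if $\t_{k-1} < \t \leq \t_k$ for some $1 \leq k \leq N-2$, the sign change occurs precisely between indices $k-1$ and $k$, placing the maximum at $k$.  These three regimes match the three cases of the corollary verbatim.  I do not foresee any real obstacle; the only mild subtlety is tie-breaking at the endpoints $\t = \t_k$, where $W(N,k) = W(N,k+1)$, but this is exactly what the corollary's asymmetric use of ``$\leq$'' on the upper boundary and ``$<$'' on the lower is designed to resolve.
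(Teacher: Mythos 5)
Your argument is correct and is essentially the paper's own proof, just written out in full: the paper likewise deduces the corollary from Theorem~\ref{t:dwmain} by observing that the positive roots $\bigl(\sum_{i=k+1}^{N-1}1/i\bigr)^{-1}$ are unique and strictly increasing in $k$, so that the sign pattern of $\DW(N,k)$ makes $W(N,\cdot)$ unimodal and $\k_N(\t)$ increasing in $\t$. Your treatment of the boundary ties at $\t=\t_k$ is a correct reading of the convention implicit in the corollary's inequalities.
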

\begin{proof}
Since, for fixed $N$, the positive roots from Theorem~\ref{t:dwmain} are unique
and increasing in $k$, we find that the strategy function $\k_N(\t)$ is increasing as
well.
\end{proof}

This completely determines the optimal strategy precisely for all $N$.  Some of
the cutoff values for $\t$ are illustrated in Figure~\ref{f:cr}.  We have
highlighted the optimal range including $\t = 1$ corresponding to the classical
uniform case.

\begin{figure}[th]
\scalebox{0.8}{
$\begin{array}{llllllllllllll}
N & k=0 & k=1 & k=2 & k=3 & k=4 & k=5 & k=6 & k=7 & k=8 & k=9 \\ 
\hline \\
2 & {\bf 1*} & \\
3 & 2/3 & {\bf 2*} & \\
4 & 6/11 & {\bf 6/5*} & 3 & \\
5 & 12/25 & 12/13 & {\bf 12/7*} & 4 & \\
6 & 60/137 & 60/77 & {\bf 60/47*} & 20/9 & 5 & \\
7 & 20/49 & 20/29 & {\bf 20/19*} & 60/37 & 30/11 & 6 & \\
8 & 140/363 & 140/223 & 140/153 & {\bf 420/319*} & 210/107 & 42/13 & 7 & \\
9 & 280/761 & 280/481 & 280/341 & {\bf 840/743*} & 840/533 & 168/73 & 56/15 & 8 & \\
10 & 2520/7129 & 2520/4609 & 2520/3349 & {\bf 2520/2509*} & 2520/1879 & 504/275 & 504/191 & 72/17 & 9 & \\
11 & 2520/7381 & 2520/4861 & 2520/3601 & 2520/2761 & {\bf 2520/2131*} & 2520/1627 & 2520/1207 & 360/121 & 90/19 & 10 & \\
\end{array}$ }
\caption{Some critical roots $(\sum_{i=k+1}^{N-1}(1/i))^{-1}$}\label{f:cr}
\end{figure}

\bigskip
\section{Asymptotic results for Ewens distribution}\label{s:aew}

To facilitate a comparison with the classical case, we can also solve
the Ewens model asymptotically.

\subsection{Optimal strategy}
For fixed $\t$ and large $N$, the optimal $k$ is given by solving
\[ \t^{-1} = \sum_{i=k}^{N-1} \frac{1}{i} = \sum_{i=k}^{N-1} \frac{1}{i/(N-1)} \frac{1}{N-1} \]
for $k$.  The latter is a Riemann sum approximation for the integral $\int_x^1
\frac{1}{t} \ dt$ where $t = \frac{i}{N-1}$ and $x = \frac{k}{N-1}$.  

Therefore, as $N \rightarrow \infty$ we obtain
\[ \t^{-1} = \int_x^1 \frac{1}{t} \ dt = -\ln x \]
which we can solve for $x = 1/e^{1/\t}$.  Thus, the optimal number of initial
rejections is approximately $k = N/e^{1/\t}$ for $N$ sufficiently large.  A plot is
shown in Figure~\ref{f:4}.

\begin{figure}[h]
\[ \includegraphics[scale=0.3]{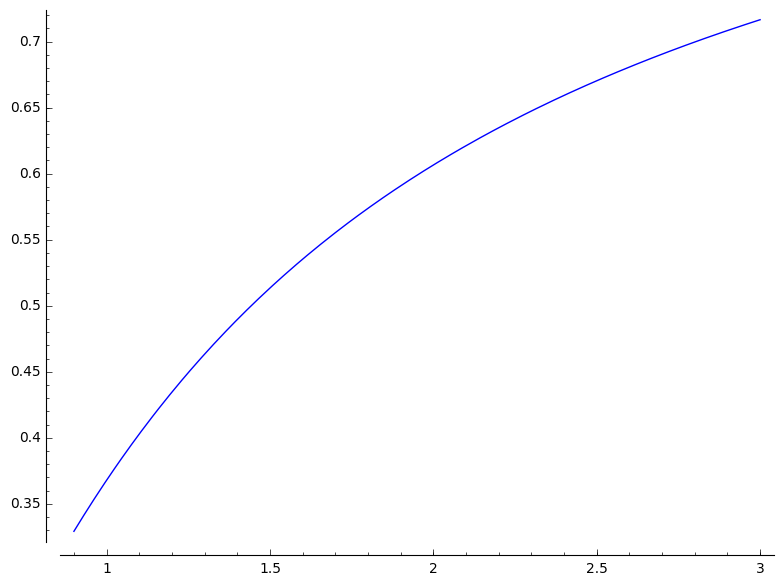} \]
\caption{Proportion of initial rejections in the Ewens distribution for $\t > 1$}\label{f:4}
\end{figure}

\bigskip
\subsection{Optimal probability of success}
Reviewing the previous section, we find that we can solve $W(N,k)$ explicitly.
Each $W(N,k)$ polynomial is just a constant in $\t$ (that depends on $N$ and
$k$) times the polynomial $\t \stir{k}$.

\begin{theorem}\label{t:eds}
For all $N$ and $k \geq 1$, we have
\[ W(N,k) = \t \stir{k} \frac{(N-1)!}{(k-1)!} \sum_{i=k}^{N-1} \frac{1}{i}. \]
When $k = 0$, we have $W(N,0) = (N-1)! \t$.
\end{theorem}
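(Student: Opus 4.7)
The plan is to prove the formula by induction on $N$, directly leveraging the recurrence $W(N,k) = (N-1)W(N-1,k) + \frac{(N-2)!}{(k-1)!} \t \stir{k}$ from Theorem~\ref{t:W}. The case $k = 0$ is handled separately: the recurrence degenerates to $W(N,0) = (N-1) W(N-1,0)$ (the $\frac{(N-2)!}{(k-1)!}$ term only appears for $k \geq 1$, corresponding to a winnable permutation whose last entry is $N$ sitting in position $N$, which requires at least one rejection), so combined with $W(1,0) = \t$ we immediately get $W(N,0) = (N-1)! \t$ by iteration.

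For $k \geq 1$, I would take $N = k$ as the base case. Here $W(k,k) = 0$ by the initial condition in Theorem~\ref{t:W}, and the proposed formula evaluates to $\t \stir{k} \cdot 1 \cdot \sum_{i=k}^{k-1} \frac{1}{i} = 0$ since the sum is empty. (Alternatively, one could take $N = k+1$ and verify both sides equal $\t \stir{k}$.) For the inductive step, I would assume $W(N-1,k) = \t \stir{k} \frac{(N-2)!}{(k-1)!} \sum_{i=k}^{N-2} \frac{1}{i}$ and substitute into the recurrence, obtaining
\[ W(N,k) = (N-1) \t \stir{k} \frac{(N-2)!}{(k-1)!} \sum_{i=k}^{N-2} \frac{1}{i} + \frac{(N-2)!}{(k-1)!} \t \stir{k}. \]
Factoring out $\t \stir{k} \frac{(N-1)!}{(k-1)!}$ and recognizing the leftover summand $\frac{1}{N-1}$ as exactly the new term needed to extend the harmonic-like sum to $i = N-1$ closes the induction.

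The hard part is essentially nothing — this is a routine verification once the recurrence from Theorem~\ref{t:W} is in hand. The only minor subtlety is lining up the boundary between the $k = 0$ case (where the factor $\frac{1}{(k-1)!}$ in the claimed formula is ill-defined and the quantity $W(N,0)$ is not of the form ``constant times $\t \stir{k}$'' because $\stir{0}$ isn't naturally part of our notation) and the generic $k \geq 1$ case, which is why the statement splits into two pieces. One could alternatively derive the formula by telescoping the explicit expression for $\DW(N,k)$ from Theorem~\ref{t:dwmain} downward from $W(N,N) = 0$, but the inductive argument above is more direct and avoids re-summing the Stirling factors.
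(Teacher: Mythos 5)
Your proof is correct and matches the paper's approach: the paper likewise proves Theorem~\ref{t:eds} by a routine induction on $N$ from the recurrence in Theorem~\ref{t:W}, with the $k=0$ case handled separately. Your verification of the inductive step and your observation that the second term of the recurrence vanishes combinatorially when $k=0$ are both accurate.
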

\begin{proof}
This is straightforward to prove by induction from Theorem~\ref{t:W}.
\end{proof}

Since the optimal value of $k$ satisfies $\t = \left(\sum_{i=k}^{N-1}
\frac{1}{i}\right)^{-1}$ we can cancel it and the optimal probability $W$
simplifies to
\[ \frac{W(N,k)}{\stir{N}} = \frac{\stir{k}}{(k-1)!} \frac{(N-1)!}{\stir{N}}. \]

When $\t$ is an integer, this is just a ratio of binomial coefficients, but for
arbitrary positive real $\t$ we use gamma functions (see e.g. \cite{NIST}).  By iterating the
recurrence $\G(z+1)=z\G(z)$ we obtain
$\stir{N} = \frac{\G(\t+N)}{\G(\t)}$.
Hence,
\[ \frac{\stir{k}}{(k-1)!} \frac{(N-1)!}{\stir{N}}
= \frac{\G(\t+k)}{\G(\t)(k-1)!} \frac{\G(\t)(N-1)!}{\G(\t+N)}
= \frac{\G(\t+k)}{\G(k)} \frac{\G(N)}{\G(\t+N)}. \]
Using $\lim_{x \rightarrow \infty} \frac{\G(x+\t)}{\G(x) x^\t} = 1$, we get
\[ \lim_{N \rightarrow \infty} \frac{\G(\t+k)}{\G(k)} \frac{\G(N)}{\G(\t+N)}
= \lim_{N \rightarrow \infty} \frac{k^\t}{N^\t} 
= \lim_{N \rightarrow \infty} \left(\frac{k}{N}\right)^{\t}  
= \left(\frac{1}{e^{1/\t}}\right)^\t = 1/e. \]
Remarkably, this probability of success is independent of $\t$.

\section{Results for Mallows distribution}\label{s:m}

We now turn to the Mallows distribution defined by $c(\pi) = \#$inversions in
$\pi$.  We begin by working out the standard ``$\t$-analogue'' for this
statistic.

\begin{definition}
Let $\ta{N}$ be the polynomial in $\t$ defined by $1+\t+\t^2+\cdots+\t^{N-1}$.
Let $\fa{N}$ be the polynomial in $\t$ defined by 
\[ \fa{N} =  \ta{N} \ta{N-1} \cdots \ta{1}. \]
\end{definition}

\begin{lemma}
We have
\[ \fa{N} = \sum_{\pi \in \SN_N} \theta^{\# \text{inversions in } \pi}. \]
\end{lemma}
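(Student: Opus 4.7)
The plan is to prove this by induction on $N$, mirroring the structure of the proof of the earlier lemma for $\stir{N}$. The base case $N = 1$ is immediate: $\fa{1} = \ta{1} = 1$, and the unique permutation in $\SN_1$ has zero inversions, contributing $\t^0 = 1$ to the sum.

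For the inductive step, assuming $\fa{N-1} = \sum_{\widetilde{\pi} \in \SN_{N-1}} \t^{\#\text{inversions in }\widetilde{\pi}}$, I would exhibit a bijection between $\SN_N$ and $\SN_{N-1} \times \{0, 1, \ldots, N-1\}$ that decomposes the inversion statistic additively. The natural choice takes $\pi \in \SN_N$ to the pair $(\widetilde{\pi}, j)$, where $\widetilde{\pi}$ is the flattening of $\pi$ obtained by deleting the entry $N$ and $j$ is the number of entries lying to the right of $N$ in $\pi$. Equivalently, every $\pi \in \SN_N$ arises uniquely from some $\widetilde{\pi} \in \SN_{N-1}$ by inserting the value $N$ into one of the $N$ available slots.

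The key observation that makes this work is that $N$ is the maximum entry, so inserting it can only create new inversions with entries to its right and cannot affect inversions among the other entries. Thus the inversion count of $\pi$ is exactly $\#\text{inversions in }\widetilde{\pi} + j$ when $j$ entries lie to the right of $N$. Summing,
\[ \sum_{\pi \in \SN_N} \t^{\#\text{inversions in }\pi} = \left( \sum_{\widetilde{\pi} \in \SN_{N-1}} \t^{\#\text{inversions in }\widetilde{\pi}} \right) \left( \sum_{j=0}^{N-1} \t^j \right) = \fa{N-1} \cdot \ta{N} = \fa{N} \]
by the inductive hypothesis and the definitions of $\ta{N}$ and $\fa{N}$. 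There is no real obstacle here; the only point requiring any care is verifying that the insertion bijection is truly inversion-preserving in the additive sense above, which is immediate because $N$ is maximal and therefore both (i) does not participate in any inversion as the smaller element and (ii) does not interfere with the relative order of the remaining entries.
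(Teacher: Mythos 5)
Your proof is correct and follows essentially the same route as the paper's: a one-step induction establishing the factorization $\fa{N} = \ta{N}\,\fa{N-1}$ via a decomposition of $\SN_N$ as $\SN_{N-1}$ times $N$ choices under which the inversion statistic is additive. The only (immaterial) difference is the choice of decomposition --- the paper classifies $\pi$ by the value $i$ placed in the last position (creating $N-i$ new inversions), whereas you classify by the position of the maximal entry $N$ (creating $j$ new inversions when $j$ entries lie to its right); both produce the factor $\ta{N}$.
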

\begin{proof}
This is straightforward to prove using induction since we may extend each
permutation $\pi$ of $N-1$ by placing one of the values $i = 1, 2, \ldots, N$
in the last position and arranging the complementary values according to $\pi$.
This creates $N-i$ new inversions, so contributes $\ta{N} \fa{N-1}$ to $\fa{N}$.
\end{proof}

Let us redefine
\[ W(N,k) = \sum_{k\text{-winnable } \pi \in \SN_N} \t^{\#\text{inversions in } \pi} \]
for the Mallows distribution.  Our next result provides a recursive description
for these polynomials.

\begin{theorem}\label{t:mW}
We have
\[ W(N,k) = \t \ta{N-1} W(N-1,k) + \t^{N-k-1} \ta{k} \fa{N-2} \]
with initial conditions $W(1, 0) = 1$ and $W(N,N) = 0$.
\end{theorem}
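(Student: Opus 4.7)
The plan is to mirror the proof of Theorem~\ref{t:W} for the Ewens distribution, splitting the $k$-winnable permutations of $\SN_N$ into two cases according to whether the last position holds the value $N$ or not. The Mallows weighting means I also need to carefully track how many inversions are created or destroyed in each case.

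First I would handle the case $\pi_N = j < N$. Removing this last entry and flattening the remaining entries produces a permutation $\widetilde{\pi} \in \SN_{N-1}$; because the relative order of positions $1, \ldots, N-1$ is unchanged, $\pi$ is $k$-winnable iff $\widetilde{\pi}$ is. The key inversion bookkeeping is that $\pi_N = j$ forms an inversion with each of the $N - j$ values in $\{j+1, \ldots, N\}$ appearing earlier, so $\#\text{inv}(\pi) = \#\text{inv}(\widetilde{\pi}) + (N-j)$. Summing over $j = 1, \ldots, N-1$ the factor $\t^{N-j}$ gives $\t + \t^2 + \cdots + \t^{N-1} = \t\ta{N-1}$, producing the first term.

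Next I would handle the case $\pi_N = N$. For $\pi$ to be $k$-winnable with $N$ at the last position, no position in $k+1, \ldots, N-1$ may be a left-to-right maximum. The key observation is that this is equivalent to the single condition that $N-1$ lies in one of the first $k$ positions: if $N-1$ sits at position $i \leq k$, then every entry at positions $i+1, \ldots, N-1$ is at most $N-2$ and hence strictly less than $\pi_i = N-1$, so none of them is a left-to-right maximum; conversely, if $N-1$ sat past position $k$, it would itself be a left-to-right maximum. Once $N-1$ is placed at some position $i \in \{1, \ldots, k\}$, the remaining $N-2$ positions (namely $\{1, \ldots, N-1\}\setminus\{i\}$) can be filled with the values $\{1, \ldots, N-2\}$ in any order, contributing $\fa{N-2}$. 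For the inversion count, the value $N$ at position $N$ contributes nothing, while $N-1$ at position $i$ contributes exactly $N-1-i$ inversions (one with each of the positions $i+1, \ldots, N-1$, none with position $N$). Summing $\t^{N-1-i}$ over $i = 1, \ldots, k$ gives $\t^{N-k-1} + \cdots + \t^{N-2} = \t^{N-k-1} \ta{k}$, yielding the second term.

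The initial conditions are immediate: $W(1,0) = \t^0 = 1$ because the strategy trivially accepts the unique permutation $[1]$, and $W(N,N) = 0$ because the strategy rejects everything. The main obstacle, if any, is being careful in the second case to verify both that the constraint on $N-1$'s position is necessary and sufficient for $k$-winnability, and to count inversions correctly without double-counting the non-inversion between $N-1$ and $N$; this is the only place where the argument differs substantively from the Ewens proof, since the Mallows weight tracks inversions rather than left-to-right maxima.
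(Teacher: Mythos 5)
Your proof is correct and follows essentially the same route as the paper's: a two-case split on whether the last entry is $N$, with the factor $\t\ta{N-1}$ coming from the $N-j$ inversions created by a last entry $j<N$, and the factor $\t^{N-k-1}\ta{k}\fa{N-2}$ coming from placing $N-1$ in one of the first $k$ positions and filling the rest arbitrarily. Your added verification that $k$-winnability (given $\pi_N=N$) is equivalent to $N-1$ occupying one of the first $k$ positions is a detail the paper leaves implicit, and your inversion counts check out.
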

\begin{proof}
We have two cases for the $k$-winnable permutations $\pi \in \SN_{N}$.
\begin{itemize}
    \item If the last position contains one of the values $i = 1, 2, \ldots,
        N-1$, then it contributes $N-i$ to the inversion count and we may view
        the complementary values as some $k$-winnable $\widetilde{\pi} \in
        \SN_{N-1}$.  Hence, these contribute $\t \ta{N-1} W(N-1,k)$ to $W(N,k)$.
    \item If the last position contains $N$ then 
        the value $N-1$ must lie in one of the first $k$ positions in order
        for $\pi$ to be $k$-winnable.  
        These choices for the position of value $N-1$ contribute $\t^{N-k-1} \ta{k}$.
        For each of these, we choose a permutation of size $N-2$ to fill in the
        remaining positions, keeping track of the inversions with $\fa{N-2}$.
\end{itemize}
The initial conditions are immediate.
\end{proof}

We can solve this recurrence.

\begin{corollary}\label{c:mm}
We have
\[ W(N,k) = \t^{N-k-1} \fa{N-1} \sum_{i=k}^{N-1} \frac{ \ta{k} }{ \ta{i} } \]
for $k > 0$ and $W(N,0) = \t^{N-1} \fa{N-1}$.
\end{corollary}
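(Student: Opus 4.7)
The plan is to induct on $N$ using the recurrence of Theorem~\ref{t:mW}, treating $k = 0$ and $k > 0$ separately since the stated formula bifurcates. Both arguments amount to routine $\t$-factorial manipulation, held together by the single telescoping identity $\fa{N-1} = \ta{N-1}\fa{N-2}$.

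For $k = 0$, the inhomogeneous term $\t^{N-k-1}\ta{k}\fa{N-2}$ in the recurrence vanishes, since with the empty-sum convention $\ta{0} = 0$ (matching the combinatorial interpretation: there is no room among the first $0$ positions to place the value $N-1$). The recurrence collapses to $W(N,0) = \t\ta{N-1}W(N-1,0)$, and iterating from $W(1,0) = 1$ telescopes directly to $\t^{N-1}\fa{N-1}$.

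For $k > 0$, fix $k$ and induct on $N \geq k+1$. The base case $N = k+1$ uses $W(k,k) = 0$ together with the recurrence to give $W(k+1,k) = \ta{k}\fa{k-1} = \fa{k}$, which agrees with the proposed formula since the single summand evaluates to $\ta{k}/\ta{k} = 1$. For the inductive step, substituting the formula for $W(N-1,k)$ into the recurrence and pulling the factor $\t\ta{N-1}$ through $\t^{N-k-2}\fa{N-2}$ produces
\begin{equation*}
W(N,k) = \t^{N-k-1}\fa{N-1}\sum_{i=k}^{N-2}\frac{\ta{k}}{\ta{i}} + \t^{N-k-1}\ta{k}\fa{N-2}.
\end{equation*}
The correction term rewrites via $\fa{N-1} = \ta{N-1}\fa{N-2}$ as $\t^{N-k-1}\fa{N-1}\cdot\ta{k}/\ta{N-1}$, which is precisely the $i = N-1$ summand missing from the sum; combining yields the desired closed form.

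The only step that requires care is matching the correction term with the missing top summand after the rewrite, but this is purely a bookkeeping check with no genuine conceptual obstacle.
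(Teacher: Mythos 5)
Your proof is correct and follows exactly the route the paper intends: the paper's own proof is just the one-line remark that the result follows by induction on $N$ from Theorem~\ref{t:mW}, and your argument fills in that induction (including the correct handling of the $k=0$ case via $\ta{0}=0$ and the telescoping step $\fa{N-1}=\ta{N-1}\fa{N-2}$). No gaps.
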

\begin{proof}
This is straightforward to prove by induction on $N$ from Theorem~\ref{t:mW}.
\end{proof}

For this distribution, the precise transition probabilities for each $N$ seem
to be inaccessible, being roots of polynomials (with complex solutions) that
use many repeated root extractions as opposed to the rational numbers we
obtained in the Ewens case.  However, we obtain some interesting asymptotic
results.  Figure~\ref{f:m1} shows a plot of the optimal success probability for
various values of $\t < 1$ based on the following theorem.

\begin{figure}[t]
\[ \includegraphics[scale=0.3]{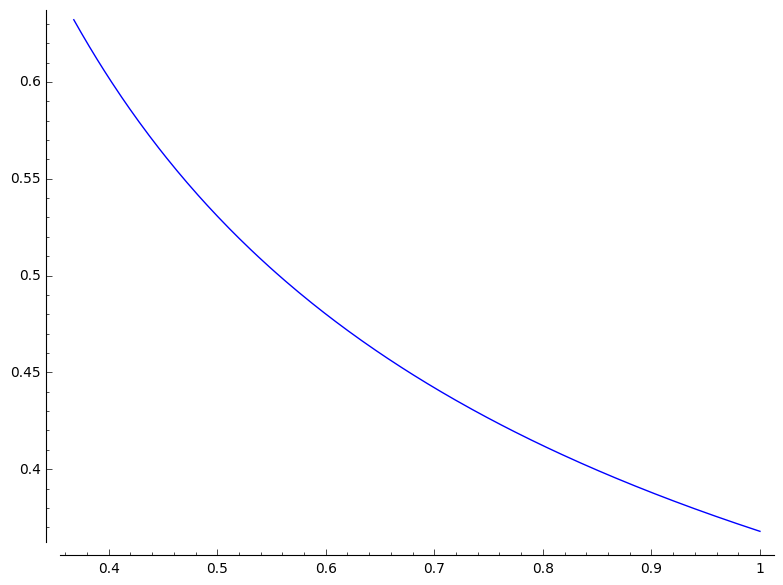} \]
\caption{Success probability in the Mallows distribution for $\t < 1$}\label{f:m1}
\end{figure}

\begin{theorem}
If $\t < 1$ then the optimal strategy as $N$ becomes large is to reject the
first $N-j$ candidates where $j = \max(-1/(\ln \t), 1)$, and select the next
left-to-right maximum thereafter.  This strategy succeeds with probability $j
\t^{j-1} (1-\t)$.  If $1/e < \t < 1$, this probability of success simplifies to
$e^{-1} \left(\frac{\t - 1}{\t \ln \t}\right)$.
\end{theorem}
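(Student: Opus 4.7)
The plan is to combine Theorem~\ref{t:main} with Corollary~\ref{c:mm}, reparameterize by the number of trailing candidates we will accept, take the limit $N \to \infty$, and then optimize a single-variable function. By Theorem~\ref{t:main} the optimum is positional and indexed by a cutoff $k \in \{0, 1, \ldots, N-1\}$; Corollary~\ref{c:mm} combined with the identity $\ta{m} = (1-\t^m)/(1-\t)$ gives the winning probability
\[ \frac{W(N,k)}{\fa{N}} = \frac{\t^{N-k-1}(1-\t)}{1-\t^N} \sum_{i=k}^{N-1} \frac{1-\t^k}{1-\t^i}. \]
The natural parameter is $j = N - k$, the number of trailing candidates we are willing to accept. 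Fixing $j \geq 1$ and letting $N \to \infty$, each of $\t^N$, $\t^k$, and $\t^i$ (for $i \geq k = N-j$) tends to $0$ because $\t < 1$; hence the prefactor tends to $(1-\t)\t^{j-1}$, each of the $j$ summands tends to $1$, and the limiting probability along this family is $j\,\t^{j-1}(1-\t)$.

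Next I would rule out the competing regime where $N - k \to \infty$. Since $0 \leq (1-\t^k)/(1-\t^i) \leq 1$ whenever $k \leq i$, the sum is bounded by $N-k$, giving
\[ \frac{W(N,k)}{\fa{N}} \leq (N-k)\,\t^{N-k-1}\,\frac{1-\t}{1-\t^N}. \]
Writing $m = N-k$, the function $m\t^{m-1}$ vanishes as $m \to \infty$ for $\t < 1$, so every sequence of cutoffs with $N-k \to \infty$ yields a vanishing winning probability. The asymptotic problem therefore reduces to maximizing $g(j) := j\,\t^{j-1}(1-\t)$ over $j \geq 1$.

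A direct calculus computation gives $g'(j) = \t^{j-1}(1-\t)(1 + j \ln \t)$, so $g$ has a unique critical point $j^* = -1/\ln\t > 0$, and sign analysis confirms it is the global maximum on $(0, \infty)$. When $\t \geq 1/e$ we have $j^* \geq 1$, so the unconstrained optimum is feasible and $j = -1/\ln\t$; when $\t < 1/e$, $g$ is strictly decreasing on $[1, \infty)$ and the constrained optimum is $j = 1$. These two cases combine into $j = \max(-1/\ln\t, 1)$. For $1/e < \t < 1$, substituting $j = -1/\ln\t$ and using the identity $\t^{-1/\ln\t} = e^{-1}$ simplifies $j\,\t^{j-1}(1-\t)$ to $e^{-1}(\t-1)/(\t \ln \t)$.

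The main obstacle I see is cosmetic: the optimal real $j^*$ is typically irrational, whereas any finite-$N$ optimal cutoff $k$ is an integer, so identifying the asymptotic optimum with the critical point of $g$ is really a statement about uniform convergence of $W(N, N-j)/\fa{N}$ to $g(j)$ on bounded intervals of $j$. Such uniform convergence follows directly from the explicit formula for the winning probability, since for $j$ in a bounded set the quantities $\t^i$ and $\t^{N-j}$ vanish uniformly as $N \to \infty$, so this subtlety poses no real difficulty.
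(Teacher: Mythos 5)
Your proposal is correct and follows essentially the same route as the paper: rewrite $W(N,k)/\fa{N}$ from Corollary~\ref{c:mm} using $\ta{m}=(1-\t^m)/(1-\t)$, reparameterize by $j=N-k$, pass to the limit to get $j\t^{j-1}(1-\t)$, and optimize by calculus. Your explicit bound ruling out the $N-k\to\infty$ regime and the remark on uniform convergence over bounded $j$ are slightly more careful than the paper's one-line dismissal, but the argument is the same.
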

\begin{proof}
Rewriting the probability $W(N,k)/\fa{N}$ from Corollary~\ref{c:mm} gives
\[ \t^{N-k-1} \frac{ \ta{k} }{ \ta{N} } \sum_{i=k}^{N-1} \frac{1}{\ta{i}} =
(1-\t) \t^{N-k-1} \frac{1-\t^k}{1-\t^N} \sum_{i=k}^{N-1} \frac{ 1 }{ 1-\t^i }  \]
\[ = \frac{\t^N-\t^{N-1}-\t^{N-k}+\t^{N-k-1}}{1 - \t^N} \sum_{i=k}^{N-1} \frac{ 1 }{
1-\t^i }. \]

For fixed $\t \in (0,1)$, as $N$ becomes large, the fraction tends to
$\t^{N-k-1}(1 - \t)$ and the terms in the series become close to $1$ so the
probability reduces to $\t^{N-k-1}(1-\t)(N-k)$.  This sequence converges to a
positive value if and only if the $N-k$ sequence converges to a finite value.

So we let $j = N-k$ obtaining $\lim_{N \rightarrow \infty} W(N,k)/\fa{N}
= j \t^{j-1} (1-\t)$.  Differentiating with respect to $j$ and setting equal to
zero, we solve to obtain the optimal $j = -1/(\ln \t)$.  For $\t < 1/e$, we
have $j < 1$ but we cannot reject more than $N-1$ candidates so max appears
in the expression.
\end{proof}

The case where $\t > 1$ is less interesting from our intrinsic learning
perspective but we sketch the behavior of these models for mathematical
completeness.  Taken together, the results also prove that the asymptotically
optimal strategy does not vary continuously with the parameter $\t$. 

\begin{figure}[t]
\[ \includegraphics[scale=0.25]{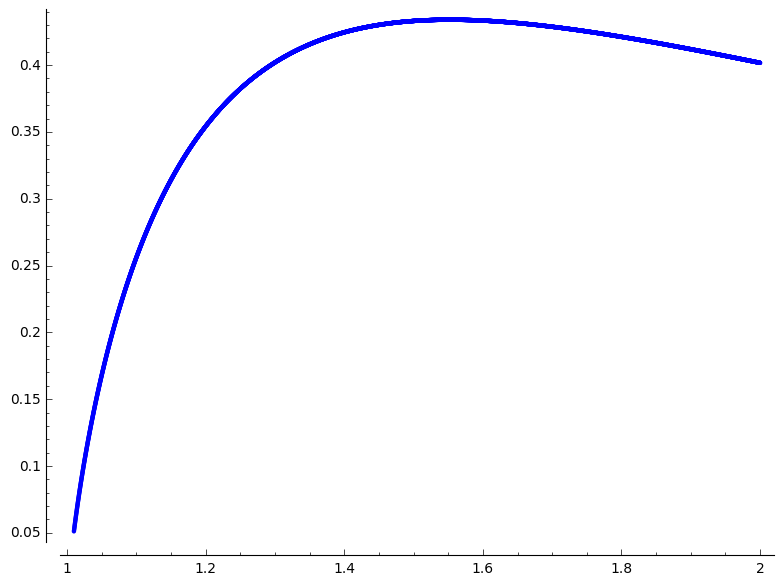} \includegraphics[scale=0.25]{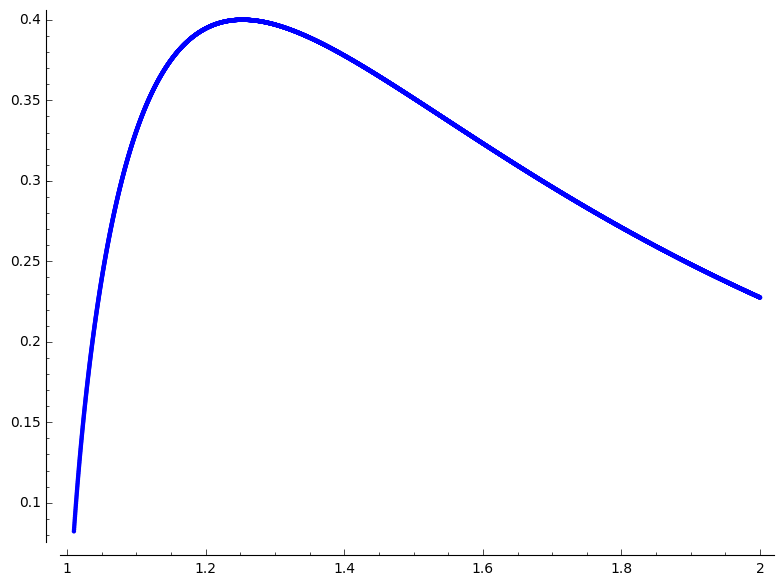} \includegraphics[scale=0.25]{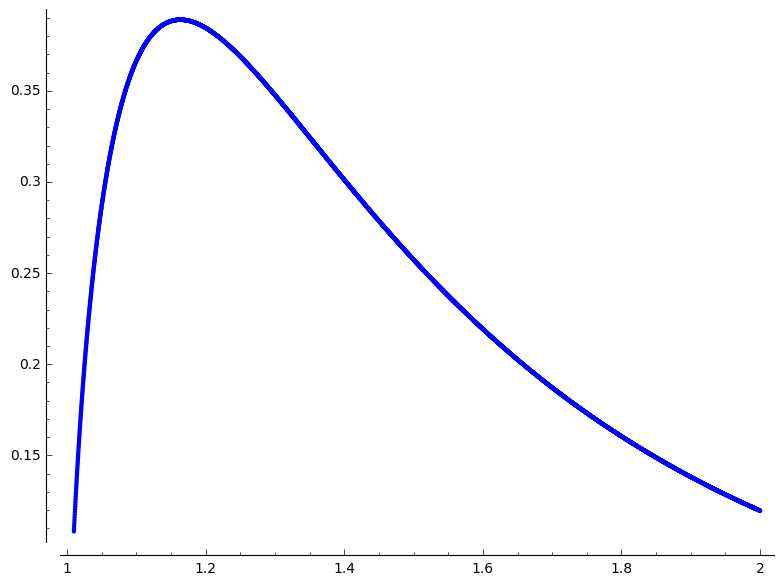} \]
\caption{The success probability for $\t > 1$ in the cases $k = 1$, $k = 2$, and $k = 3$.
The maximum occurs at $(1.55, 0.433939)$ for $k = 1$, $(1.25, 0.400125)$ for $k = 2$, and $(1.16, 0.389029)$ for $k = 3$.}\label{f:m2}
\end{figure}

\begin{corollary}\label{c:mm2}
Fix $\t > 1$ and an integer $k \geq 1$.  Then,
\[ \lim_{N \rightarrow \infty} \frac{W(N,k)}{\fa{N}} = (\t-1) \frac{\t^k - 1}{\t^{k+1}} \sum_{i=k}^{\infty} \frac{1}{\t^i - 1}. \]
In particular, we approach a single asymptotic distribution as $N \ra \infty$
so the optimal strategy will be to reject some number of initial candidates
(not depending on $N$) and select the next left-to-right maximum thereafter.
\end{corollary}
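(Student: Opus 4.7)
The plan is to read off the claimed formula directly from Corollary~\ref{c:mm} by dividing through by $\fa{N}$ and then taking $N \to \infty$ term by term. First I would rewrite the expression for $W(N,k)/\fa{N}$ using $\fa{N} = \ta{N}\fa{N-1}$ and the explicit form $\ta{m} = (\t^m - 1)/(\t - 1)$, obtaining
\[ \frac{W(N,k)}{\fa{N}} = \frac{\t^{N-k-1}}{\ta{N}} \sum_{i=k}^{N-1} \frac{\ta{k}}{\ta{i}} = \frac{(\t-1)\t^{N-k-1}}{\t^N - 1}\,(\t^k - 1)\sum_{i=k}^{N-1} \frac{1}{\t^i - 1}. \]

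Next I would evaluate the limit of the prefactor and the tail of the sum separately. Since $\t > 1$, the prefactor $(\t-1)\t^{N-k-1}/(\t^N - 1)$ tends to $(\t-1)/\t^{k+1}$ as $N \to \infty$, because dividing numerator and denominator by $\t^N$ sends the denominator to $1$. For the sum, the summands are positive and satisfy $1/(\t^i - 1) \leq 2/\t^i$ for $i$ large enough, so the series $\sum_{i=k}^{\infty} 1/(\t^i - 1)$ converges (comparison with a geometric series of ratio $1/\t < 1$). Combining these two facts gives
\[ \lim_{N \to \infty} \frac{W(N,k)}{\fa{N}} = (\t - 1)\,\frac{\t^k - 1}{\t^{k+1}}\,\sum_{i=k}^{\infty}\frac{1}{\t^i - 1}, \]
which is precisely the claimed limit.

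For the second assertion, I would observe that the right-hand side depends only on $\t$ and $k$, not on $N$. Hence, by Theorem~\ref{t:main} and Remark~\ref{r:use}, the asymptotically optimal number of initial rejections is determined by choosing $k \in \{1, 2, 3, \ldots\}$ to maximize this $N$-independent expression (with $k=0$ handled separately, or dismissed by the $W(N,0) = \t^{N-1}\fa{N-1}$ branch, which gives asymptotic probability $0$ as compared to the $k \geq 1$ contributions). In particular the optimal $k$ is bounded independently of $N$, so one rejects a bounded number of initial candidates and accepts the next left-to-right maximum thereafter.

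There is essentially no obstacle here: the main subtlety is just the interchange of limit and infinite sum in the second factor, which is justified by the uniform geometric bound $1/(\t^i - 1) \leq 2\t^{-i}$. Everything else is algebraic manipulation of the closed form already supplied by Corollary~\ref{c:mm}.
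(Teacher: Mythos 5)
Your derivation of the limit formula is correct and is essentially the paper's own argument: both proofs start from Corollary~\ref{c:mm}, divide by $\fa{N}$, send the prefactor (which you keep factored as $(\t-1)\t^{N-k-1}(\t^k-1)/(\t^N-1)$ and the paper writes in expanded form) to $(\t-1)(\t^k-1)/\t^{k+1} = (1-\t^{-1})(1-\t^{-k})$, and note that the partial sums increase to a convergent series; your geometric comparison plays the role of the paper's integral test, and the ``interchange'' you worry about is immediate since the partial sums are exactly the truncations of the series. Two points on the final assertion, however. First, your parenthetical claim that the $k=0$ branch ``gives asymptotic probability $0$'' is false: $W(N,0)/\fa{N} = \t^{N-1}/\ta{N} = (1-\t^{-1})/(1-\t^{-N}) \to 1-\t^{-1} > 0$ for $\t>1$ (this is the Mallows probability that the first candidate is best, which is bounded away from zero when inversions are favored --- indeed at $\t=2$ it equals $1/2$ and exceeds the $k=1$ value). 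Second, to conclude that the optimal number of rejections is a \emph{fixed} $k$ you need the maximum of the $N$-independent limiting values over $k$ to be attained; the paper supplies this by observing that the tail series is approximately $\t^{-k}/(1-\t^{-1})$, so the asymptotic success probability tends to $0$ as $k\to\infty$ and the supremum is achieved at some finite $k$. You assert that the optimal $k$ is bounded but do not justify it. Neither issue affects the displayed limit, which is the main content of the corollary and which you prove correctly by the same route as the paper.
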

\begin{proof}
Once again, consider 
\[ \frac{W(N,k)}{\fa{N}} = \frac{\t^N-\t^{N-1}-\t^{N-k}+\t^{N-k-1}}{\t^N - 1} \sum_{i=k}^{N-1} \frac{ 1 }{ \t^i - 1 }. \]
For fixed $\t > 1$, as $N$ becomes large, the fraction tends to
$1-\t^{-1}-\t^{-k}+\t^{-(k+1)} = (1-\t^{-1})(1 - \t^{-k})$ and the series
converges (e.g. by the integral test).  

Thus, every $k$-positional strategy has some nonzero probability in the limit
as $N \rightarrow \infty$.  As $k$ becomes large, the series tends to zero,
being approximately $\sum_{i=k}^\infty (1/\t)^i = \t^{-k}/(1-\t^{-1})$.
Therefore, the optimal asymptotic probability will occur for some fixed $k$
(depending on $\t$).
\end{proof}

Figure~\ref{f:m2} shows some plots (obtained numerically) of the optimal
success probabilities for various values of $\t$ and $k$.  For any particular
value of $\t > 1$, one can explicitly compute the probabilities in
Corollary~\ref{c:mm2} and find the optimal strategy $k$.  For example, it
appears that $k = 1$ is optimal for all $\t > 1.285$ (approximately).  Although
the series always converges, finding a closed form for its limiting value
involves the ``$q$-digamma function'' which prevents us from obtaining a simple
description of the optimal strategy in general.  It also appears that the
maximal probability of success for these models occurs when $\t \approx 1.55$
(for which $k = 1$ is the optimal strategy).  It would be interesting to
determine this maximum more precisely.

\section*{Acknowledgements}

We thank Laura Taalman, John Webb, and Becky Wild for helpful discussions related to this
work.



\providecommand{\bysame}{\leavevmode\hbox to3em{\hrulefill}\thinspace}
\providecommand{\MR}{\relax\ifhmode\unskip\space\fi MR }
\providecommand{\MRhref}[2]{%
  \href{http://www.ams.org/mathscinet-getitem?mr=#1}{#2}
}
\providecommand{\href}[2]{#2}

\end{document}